\documentclass[final,1p,times]{elsarticle}

\journal{Applied Numerical Mathematics}

\usepackage{amsmath,amssymb,amsthm,mathtools,graphicx,microtype}
\usepackage{bm,pgfplots}
\pgfplotsset{compat=1.17}
\usepackage{array,hyperref,enumitem}
\usepackage{booktabs}
\usepackage{orcidlink}

\newcolumntype{M}[1]{>{\centering\arraybackslash}m{#1}}

\theoremstyle{plain}
\newtheorem{theorem}{Theorem}[section]
\newtheorem{lemma}[theorem]{Lemma}

\theoremstyle{definition}
\newtheorem{definition}[theorem]{Definition}

\theoremstyle{remark}
\newtheorem{remark}[theorem]{Remark}

\newtheorem{assumption}[theorem]{Assumption}
\newtheorem{problem}[theorem]{Problem}

\newcommand{\dx}{\,\mathrm{d}x}
\newcommand{\dt}{\,\mathrm{d}t}

\newcommand{\Div}{\operatorname{div}}

\renewcommand{\rho}{\varrho}

\definecolor{myblue}{RGB}{31,119,180}
\definecolor{myorange}{RGB}{255,127,14}
\definecolor{mygreen}{RGB}{44,160,44}

\definecolor{color0}{HTML}{4E79A7}
\definecolor{color1}{HTML}{F28E2B}
\definecolor{color2}{HTML}{E15759}
\definecolor{color3}{HTML}{76B7B2}
\definecolor{color4}{HTML}{59A14F}
\definecolor{color5}{HTML}{EDC948}
\definecolor{color6}{HTML}{B07AA1}
\definecolor{color7}{HTML}{FF9DA7}
\definecolor{color8}{HTML}{9C755F}
\definecolor{color9}{HTML}{BAB0AC}

\begin{document}

\begin{frontmatter}

\title{Structure-preserving discretization and fingering dynamics of a
Cahn--Hilliard model for traction-driven digit morphogenesis}

\author[uvienna]{Marvin Fritz\orcidlink{0000-0002-8360-7371}}
% optionally add:
% \ead{marvin.fritz@...}
% \cortext[cor1]{Corresponding author}
% \author[uvienna]{Marvin Fritz\corref{cor1}}

\affiliation[uvienna]{organization={Faculty of Mathematics, University of Vienna},
            city={Vienna},
            country={Austria}}

\begin{abstract}
We study a Cahn--Hilliard equation with anisotropic traction flux arising as a
reduced continuum model of mechanically biased cell interactions in
digit-forming organoids. For a regularized problem with strictly positive
bounded mobility, we introduce a mixed finite element discretization based on
an implicit--explicit treatment of the chemical potential. We prove existence
of discrete solutions, establish exact mass conservation and a discrete energy
inequality, and show convergence of the fully discrete approximations to a weak
solution of the regularized problem. Numerical experiments illustrate the
resulting dynamics and show the transition from classical coarsening to
traction-induced fingering and protrusive growth. The computational study is
complemented by mass and energy diagnostics, an energy-balance residual,
fingering-onset and protrusion-count diagnostics, and a manufactured-solution
convergence study.
\end{abstract}

\begin{keyword}
Cahn--Hilliard equation \sep anisotropic traction flux \sep structure-preserving finite element method \sep organoids \sep digit morphogenesis \sep mechano-chemical patterning
\end{keyword}

\end{frontmatter}

\section{Introduction}
We study the Cahn--Hilliard-type equation
\begin{align*}
\partial_t\rho
&=\Div(\rho\nabla q)+\varepsilon^{-1}\eta^2 \Div(\rho D(\nu)\nabla\rho),\\
q
&=\varepsilon^{-1}W'(\rho)-\varepsilon\Delta\rho,
\end{align*}
posed on bounded domains with no-flux boundary conditions, where \(D(\nu)\) is a prescribed bounded symmetric nonnegative tensor field. The model was derived in \cite{Tsutsumi2025} as a reduced continuum description of mechanically biased cell interactions in digit-forming organoids. It combines degenerate Cahn--Hilliard diffusion with an anisotropic traction-driven flux and is capable of producing protrusive, finger-like morphologies.

The continuum equation is analytically challenging because the mobility \(m(\rho)=\rho\) degenerates at vacuum and the additional traction term is not generated by the free energy. In our work \cite{fritz2026global}, the existence of global weak solutions for the one-sided degenerate problem was obtained by combining energy estimates with a mobility-matched entropy method. The present paper has the following purpose: we focus on the numerical approximation of a regularized version of the model and on the resulting pattern-forming dynamics.

In fact, we introduce a mixed finite element discretization with an implicit--explicit treatment of the chemical potential based on a convex--concave splitting of the bulk potential. The scheme is designed to retain key structural features of the continuum model at the discrete level. In particular, it preserves mass exactly and satisfies a discrete energy inequality in which the anisotropic traction term appears as a controlled perturbation of the Cahn--Hilliard dissipation mechanism.

The numerical analysis of Cahn--Hilliard equations has a long history. Classical finite element convergence results include the fully discrete analysis for logarithmic free energies by Copetti and Elliott \cite{CopettiElliott1992} and the mixed finite element error analysis of Feng and Prohl \cite{FengProhl2004}. For variable mobilities, important references are the finite element approximation of Barrett and Blowey \cite{barrett_blowey_1999} and the later works on degenerate Cahn--Hilliard systems \cite{barrett1999finite,BarrettBloweyGarcke2001}. Related structure-preserving and application-driven discretizations for Cahn--Hilliard-type models include, for example, the work on elasticity-coupled Cahn--Hilliard dynamics \cite{GarckeRumpfWeikard2001}, the variable-mobility scheme \cite{kim2007numerical}, the ternary degenerate-mobility method \cite{kim2009numerical}, the coupled tumor growth analysis \cite{brunk2025analysis,brunk2025structure,garcke2022numerical}, the work on variable mobility dynamics with nonlocal interactions \cite{brunk2025ohta}, the relative-energy stability approach \cite{brunk2026high}, and the degenerate finite element error analysis \cite{Agosti2018}. For broader overviews of numerical methods for Cahn--Hilliard equations and related energy-based models, see also the review articles \cite{brunk2026reviewthermodynamicstructuresstructurepreserving,TierraGuillenGonzalez2015}.

Against this background, the present setting is distinguished by the anisotropic higher-order traction flux. Unlike the standard mobility-weighted Cahn--Hilliard equation, the additional
traction flux is not generated by the variational derivative of the
Cahn--Hilliard energy. Hence the system is not a gradient flow of
\eqref{eq:E} with respect to the mobility-induced metric, and the analysis
requires a balance between the Cahn--Hilliard dissipation and the additional
traction transport. Our first objective is therefore structural: we prove existence of solutions to the fully discrete problem, establish exact mass conservation, derive a discrete energy inequality, and show convergence, for fixed mobility regularization, of the fully discrete approximations as \(h,\tau\to0\) to a weak solution of the regularized continuous problem. This provides a mathematically consistent approximation framework for the anisotropic traction model.

Our second objective is computational. Using the proposed scheme, we
investigate how the traction strength \(\eta\) and the prescribed direction
field \(\nu\) influence the morphology of the evolving phase field. The
simulations show a clear transition from the nearly isotropic coarsening regime
of the classical Cahn--Hilliard equation to a traction-dominated regime in
which the diffuse interface destabilizes and develops elongated protrusions.
In particular, radial traction fields produce robust finger-like structures
whose onset time, number, and wavelength depend systematically on the traction
strength. We also report structure-preservation diagnostics, including a
one-step energy-balance residual, and a manufactured-solution convergence
study. 

The biological motivation is rooted in classical and modern ideas on self-organized pattern formation. Beginning with Turing's seminal work on morphogenesis \cite{Turing1952}, reaction--diffusion and related symmetry-breaking mechanisms have become central in developmental pattern formation; see, for example, the review of Kondo and Miura \cite{KondoMiura2010}. In the specific context of limb and digit formation, Turing-type ideas have been revisited and supported in increasingly detailed developmental studies; see, for example, \cite{Raspopovic2014,Sheth2012}. The reduced organoid model considered here is different in mechanism, since it couples diffuse-interface dynamics with mechanically biased transport, but it addresses the same broader question of how robust protrusive and digit-like patterns can emerge from local interaction rules.

The rest of the paper is organized as follows.
In Section~\ref{sec:model} we recall the reduced continuum model and introduce
the regularized problem used in the numerical analysis.
Section~\ref{sec:numerics} introduces the fully discrete finite element scheme,
proves existence of discrete solutions, establishes its basic
structure-preserving properties, and shows convergence to a weak solution of
the regularized problem.
Section~\ref{sec:sims} presents numerical experiments, including
structure-preservation diagnostics, traction-induced fingering, an empirical
onset and protrusion-count study, and a manufactured-solution convergence
benchmark.
Finally, Section~\ref{sec:conclusion} summarizes the results and discusses
open problems.

\section{Model and regularization}\label{sec:model}

We briefly recall the reduced continuum model introduced in \cite{Tsutsumi2025} and then state the regularized problem used in the numerical analysis.

Let \(\Omega\subset\mathbb{R}^d\) (\(d\in\{2,3\}\)) be a bounded domain with outer unit normal \(n\), and let \(T>0\). The reduced continuum model for the cell density \(\rho=\rho(x,t)\) is
\begin{align}
\partial_t\rho
&=\Div\bigl(\rho \nabla q\bigr)
+\kappa \Div\bigl(\rho D(\nu)\nabla\rho\bigr),
\label{eq:PDE}
\\
q
&=\varepsilon^{-1}W'(\rho)-\varepsilon\Delta\rho,
\label{eq:q2}
\end{align}
where \(\kappa=\varepsilon^{-1}\eta^2\), \(W\) is a double-well-type bulk potential, and \(D(\nu)\) is a prescribed bounded symmetric nonnegative tensor field encoding anisotropic traction along a preferred direction field \(\nu\). We impose the no-flux boundary conditions
\begin{equation}\label{eq:BC}
\nabla\rho\cdot n = 0,
\qquad
\bigl(\rho\nabla q+\kappa \rho D(\nu)\nabla\rho\bigr)\cdot n=0
\qquad\text{on }\partial\Omega\times(0,T),
\end{equation}
together with the initial condition
\[
\rho(\cdot,0)=\rho_0
\qquad\text{in }\Omega.
\]
The model arises as a reduced one-field description of mechanically biased cell interactions in digit-forming organoids \cite{Tsutsumi2025}. The Cahn--Hilliard part describes phase-separating density rearrangement, whereas the additional traction term biases transport along preferred directions and can induce protrusive, finger-like morphologies.

In the present paper we do not discretize the degenerate mobility \(m(\rho)=\rho\) directly. Instead, we consider a regularized problem with a mobility
\[
m\in C^2(\mathbb{R}),
\qquad
0<m_1\le m(r)\le m_2
\qquad\forall r\in\mathbb{R},
\]
which is strictly positive, globally bounded, and smooth. The regularized problem reads
\begin{align}
\partial_t\rho
&=\Div\bigl(m(\rho)\nabla \mu\bigr)
+\kappa \Div\bigl(m(\rho)D(\nu)\nabla\rho\bigr),
\label{eq:reg_model_rho}
\\
\mu
&=\varepsilon^{-1}W'(\rho)-\varepsilon\Delta\rho,
\label{eq:reg_model_mu}
\end{align}
subject to
\begin{equation}\label{eq:reg_model_bc}
\nabla\rho\cdot n = 0,
\qquad
\bigl(m(\rho)\nabla\mu+\kappa m(\rho)D(\nu)\nabla\rho\bigr)\cdot n = 0
\qquad\text{on }\partial\Omega\times(0,T).
\end{equation}
This system retains the conservative structure of the original equation while removing the degeneracy of the mobility, which is the setting used throughout the numerical analysis below.

The associated free energy is
\begin{equation}\label{eq:E}
E(\rho):=\int_\Omega\Bigl(\varepsilon^{-1}W(\rho)+\frac{\varepsilon}{2}|\nabla\rho|^2\Bigr)\dx.
\end{equation}
Moreover, under the no-flux boundary conditions, both the original and the regularized problems conserve the total mass \(\int_\Omega \rho(t)\dx\).
For the time discretization we shall use a convex--concave splitting of the bulk potential \(W\); this is introduced together with the fully discrete scheme in Section~\ref{sec:numerics}.

\begin{remark}[Relation to energy-based anisotropic variants]
The mobility-weighted Cahn--Hilliard equation
\[
\partial_t\rho=\Div(m(\rho)\nabla\mu),
\qquad
\mu=\frac{\delta E}{\delta\rho},
\]
is a generalized gradient flow of \(E\) with respect to the
mobility-induced Onsager metric. The additional traction flux in
\eqref{eq:reg_model_rho},
\[
\kappa\Div\bigl(m(\rho)D(\nu)\nabla\rho\bigr),
\]
is different in nature: it is imposed as an extra conservative transport term
and is not, in general, generated by the variational derivative of the
Cahn--Hilliard energy \eqref{eq:E}.

To compare with a genuinely energy-based modification, suppose for simplicity
that the anisotropy is represented by a scalar coefficient
\(a=a(\nu(x))\). Adding the quadratic term
\[
\frac{\kappa}{2}\int_\Omega a(x)\rho^2\dx
\]
to the energy would give the chemical potential
\[
\mu_{\rm eb}
=
\varepsilon^{-1}W'(\rho)-\varepsilon\Delta\rho+\kappa a(x)\rho
\]
and hence the additional flux
\[
\kappa\Div\bigl(m(\rho)\nabla(a\rho)\bigr)
=
\kappa\Div\bigl(m(\rho)a\nabla\rho\bigr)
+
\kappa\Div\bigl(m(\rho)\rho\nabla a\bigr).
\]
Thus even in the scalar case the energy-based model differs from the traction
model by the drift term
\(\kappa\Div(m(\rho)\rho\nabla a)\), unless \(a\) is spatially constant. For
the tensor-valued anisotropy \(D(\nu)\) used in this paper, such as
\(D=\nu\otimes\nu\), the second-order term
\(\Div(m(\rho)D(\nu)\nabla\rho)\) is therefore best understood as a prescribed
anisotropic traction flux rather than as the gradient flow of a local scalar
energy.
\end{remark}

\section{Finite element discretization}\label{sec:numerics}

\begin{assumption}\label{ass:num_model}
Assume:
\begin{enumerate}\itemsep0em
\item \(\Omega\subset\mathbb R^d\) (\(d\in\{2,3\}\)) is bounded with \(C^{1,1}\) boundary, \(T>0\), \(\varepsilon>0\), \(\eta\ge0\), and \(\kappa:=\varepsilon^{-1}\eta^2\).
\item The prescribed anisotropy tensor satisfies
\[
D(\nu)\in C([0,T];L^\infty(\Omega;\mathbb R^{d\times d})),
\qquad
D(\nu)=D(\nu)^\top,
\qquad
0\le D(\nu)\le \Lambda I
\quad\text{a.e. in }\Omega_T .
\]
\item \(W\in C^2(\mathbb R)\), and there exist constants \(c_0,c_1,C_W>0\) and an exponent
\[
2<p<\infty \quad\text{if } d=2,
\qquad
2<p<4 \quad\text{if } d=3,
\]
such that
\[
W(s)\ge -c_0,
\qquad
W''(s)\ge -c_1
\qquad\forall s\in\mathbb R,
\]
and
\[
|W'(s)|\le C_W(1+|s|^{p-1}),
\qquad
|W''(s)|\le C_W(1+|s|^{p-2})
\qquad\forall s\in\mathbb R.
\]
\end{enumerate}
\end{assumption}

\begin{remark}[Coverage of the standard double-well potential]
In dimension \(d=2\), Assumption~\ref{ass:num_model}(3) allows arbitrary
polynomial growth and therefore covers the standard quartic potential
\[
W(s)=\frac14 s^2(1-s)^2
\]
used in the numerical experiments of Section~\ref{sec:sims}. In dimension
\(d=3\), the restriction \(p<4\) excludes the borderline quartic case from
the convergence statement of Theorem~\ref{thm:conv_scheme}. This restriction
enters only through the passage to the limit in the nonlinear term
\(W_1'(\rho_{h,\tau})\), where we use strong convergence in
\(L^2(0,T;L^{2(p-1)}(\Omega))\) and the Sobolev embedding
\(H^1(\Omega)\hookrightarrow L^6(\Omega)\). Treating the three-dimensional
quartic case would require an additional argument, for instance a compactness
or monotonicity argument that does not rely on this strict subcritical
embedding. The structure-preserving results
Theorems~\ref{thm:disc_structure_num}--\ref{thm:exist_step_num} are not
affected by this restriction.
\end{remark}

Assumption~\ref{ass:num_model}(3) implies that \(W\) is \(c_1\)-semiconvex.
Equivalently, \(W\) admits the convex--concave splitting
\[
W=W_1+W_2
\]
with
\[
W_1(s):=W(s)+\frac{c_1}{2}s^2 \ \text{convex},
\qquad
W_2(s):=-\frac{c_1}{2}s^2 \ \text{concave}.
\]
In particular,
\[
W_1''(s)=W''(s)+c_1\ge0,
\qquad
W_2''(s)\equiv -c_1,
\]
and therefore
\begin{equation}\label{eq:splitting_identity_num}
W_1'(s)+W_2'(r)=W'(s)+c_1(s-r)\qquad(\forall r,s\in\mathbb R).
\end{equation}

We now present the mixed finite element discretization used in the simulations of Section~\ref{sec:sims}, and analyze its structure-preserving properties and convergence. Throughout, we consider a regularized problem in which the mobility is strictly positive, globally bounded, and smoothed near the truncation points. To keep the notation light, we suppress the regularization parameters. Accordingly, \(m\) denotes a strictly positive and globally bounded \(C^2\)-mobility satisfying
\begin{equation}\label{eq:mob_assump_num}
m\in C^2(\mathbb R),\qquad
0<m_1\le m(r)\le m_2,\qquad
\|m'\|_{L^\infty(\mathbb R)}\le m_3,\qquad
\|m''\|_{L^\infty(\mathbb R)}\le m_4.
\end{equation}
For the numerical experiments below, the regularization is applied only at the level of the mobility, while the bulk potential is taken to be the original double-well potential \(W\). Let \(\tau>0\), \(t^n=n\tau\) (\(n=0,\dots,N\), \(N\tau=T\)), and denote
\[
D^n:=D(\nu(\cdot,t^n)).
\]
Furthermore, let \(\{\mathcal T_h\}_{h>0}\) be a shape-regular and quasi-uniform
family of triangulations of \(\Omega\), and let
\[
V_h:=\{v_h\in C^0(\overline\Omega):\ v_h|_K\ \text{affine for all }K\in\mathcal T_h\}\subset H^1(\Omega)
\]
be the conforming \(P_1\) space.
To fix the additive constant in the chemical potential, we set
\[
V_h^0:=\{v_h\in V_h:\ (v_h,1)=0\}.
\]
We approximate \(\rho\) in \(V_h\) and \(q\) in \(V_h^0\), i.e. we use a mixed \(P_1\)--\(P_1\) scheme with mean-zero constraint for the chemical potential.
At the discrete level we work with the mean-zero representative \(q_h^n\in V_h^0\) of the chemical potential, which is the natural unknown for the saddle-point structure of the mixed scheme. The full chemical potential \(\mu_h^n\) is reconstructed in \eqref{eq:mu_full_discrete} below and is used only in the convergence analysis of Section~\ref{subsec:conv}.

\begin{problem}[Fully discrete step]\label{prob:fullydisc_simple}
Given \(\rho_h^{n-1}\in V_h\), define the explicit mobility \(m^{n-1}:=m(\rho_h^{n-1})\) with \(m^{n-1}\ge m_1>0\).
Find \((\rho_h^n,q_h^n)\in V_h\times V_h^0\) such that for all \((\varphi,\psi)\in V_h\times V_h^0\),
\begin{align}
\frac{1}{\tau} \bigl(\rho_h^n-\rho_h^{n-1},\varphi\bigr)
&+ \bigl(m^{n-1}\nabla q_h^n,\nabla\varphi\bigr)
+ \kappa \bigl(m^{n-1}D^n\nabla\rho_h^n,\nabla\varphi\bigr)=0,
\label{eq:scheme1_imex_num}
\\
(q_h^n,\psi)
&=\varepsilon^{-1}\bigl(W_1'(\rho_h^n)+W_2'(\rho_h^{n-1}),\psi\bigr)
+\varepsilon(\nabla\rho_h^n,\nabla\psi).
\label{eq:scheme2_imex_num}
\end{align}
\end{problem}

The first equation is a finite element weak form of a conservative balance.
Indeed, with the numerical fluxes
\[
J_{1,h}^n:=m^{n-1}\nabla q_h^n,
\qquad
J_{2,h}^n:=\kappa m^{n-1}D^n\nabla\rho_h^n,
\]
\eqref{eq:scheme1_imex_num} can be written as
\[
\frac{1}{\tau}
\left(\rho_h^n-\rho_h^{n-1},\varphi\right)
=
-(J_{1,h}^n+J_{2,h}^n,\nabla\varphi)
\qquad\forall \varphi\in V_h.
\]
Thus conservation is imposed in the usual weak finite element sense, by testing
the divergence form against scalar test functions. In particular, choosing
\(\varphi\equiv1\) gives exact conservation of the total mass.

\begin{theorem}[Structure-preserving properties]\label{thm:disc_structure_num}
Let \((\rho_h^n,q_h^n)\) solve \eqref{eq:scheme1_imex_num}--\eqref{eq:scheme2_imex_num}. Then:
\begin{enumerate}\itemsep0em
\item \textbf{Mass conservation.}
\[
(\rho_h^n,1)=(\rho_h^{n-1},1)\qquad \text{for all }n\ge 1.
\]

\item \textbf{Discrete energy inequality.}
We write \(E_h:=E|_{V_h}\) for the restriction of the continuum energy
\eqref{eq:E} to the finite element space, i.e.
\[
E_h(\rho_h):=\int_\Omega
\Big(\varepsilon^{-1}W(\rho_h)+\frac{\varepsilon}{2}|\nabla\rho_h|^2\Big)\dx
\qquad(\rho_h\in V_h).
\]
Then for every \(n\ge1\),
\begin{equation}\label{eq:disc_energy_ineq_num}
\begin{aligned}
E_h(\rho_h^n)
&+\frac{\varepsilon}{2}\|\nabla(\rho_h^n-\rho_h^{n-1})\|_{L^2(\Omega)}^2
+\frac{\tau}{2}\int_\Omega m^{n-1}|\nabla q_h^n|^2\dx
\\
&\le
E_h(\rho_h^{n-1})
+ C(\Lambda)\kappa^2 \tau\int_\Omega m^{n-1}|\nabla\rho_h^n|^2\dx .
\end{aligned}
\end{equation}
In particular, if \(\kappa=0\) then the scheme is unconditionally energy stable:
\[
E_h(\rho_h^n)
+\frac{\varepsilon}{2}\|\nabla(\rho_h^n-\rho_h^{n-1})\|_{L^2(\Omega)}^2
+\tau\int_\Omega m^{n-1}|\nabla q_h^n|^2\dx
\le E_h(\rho_h^{n-1})\qquad(n\ge1).
\]
\end{enumerate}
\end{theorem}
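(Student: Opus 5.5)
Mass conservation follows by taking \(\varphi\equiv1\in V_h\) in \eqref{eq:scheme1_imex_num}. Since \(\nabla 1=0\), both flux terms vanish, and \(\tau^{-1}(\rho_h^n-\rho_h^{n-1},1)=0\) remains.

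For the energy inequality we use the standard mixed test-function pairing.

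\textbf{Step 1 (choice of test functions).} Take \(\varphi=\tau q_h^n\in V_h^0\subset V_h\) in \eqref{eq:scheme1_imex_num}. By mass conservation, \(\delta\rho:=\rho_h^n-\rho_h^{n-1}\) has zero mean, so \(\psi=\delta\rho\in V_h^0\) is admissible in \eqref{eq:scheme2_imex_num}. Subtracting the two identities removes \((q_h^n,\delta\rho)\) and gives
\[
\varepsilon^{-1}\bigl(W_1'(\rho_h^n)+W_2'(\rho_h^{n-1}),\delta\rho\bigr)+\varepsilon(\nabla\rho_h^n,\nabla\delta\rho)
+\tau\int_\Omega m^{n-1}|\nabla q_h^n|^2\dx
=-\tau\kappa\bigl(m^{n-1}D^n\nabla\rho_h^n,\nabla q_h^n\bigr).
\]

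\textbf{Step 2 (gradient term).} Apply the polarization identity \(2a\cdot(a-b)=|a|^2-|b|^2+|a-b|^2\). This turns \(\varepsilon(\nabla\rho_h^n,\nabla\delta\rho)\) into
\[
\tfrac{\varepsilon}{2}\|\nabla\rho_h^n\|^2-\tfrac{\varepsilon}{2}\|\nabla\rho_h^{n-1}\|^2+\tfrac{\varepsilon}{2}\|\nabla\delta\rho\|^2 .
\]

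\textbf{Step 3 (bulk potential).} Use convexity of \(W_1\) and concavity of \(W_2\) pointwise:
\[
W_1(\rho_h^n)-W_1(\rho_h^{n-1})\le W_1'(\rho_h^n)\,\delta\rho,
\qquad
W_2(\rho_h^n)-W_2(\rho_h^{n-1})\le W_2'(\rho_h^{n-1})\,\delta\rho .
\]
Adding these and integrating gives \(\int_\Omega\bigl(W(\rho_h^n)-W(\rho_h^{n-1})\bigr)\dx\le(W_1'(\rho_h^n)+W_2'(\rho_h^{n-1}),\delta\rho)\). Combining with Step 2, the left-hand side of Step 1 bounds
\[
E_h(\rho_h^n)-E_h(\rho_h^{n-1})+\tfrac{\varepsilon}{2}\|\nabla\delta\rho\|^2+\tau\int_\Omega m^{n-1}|\nabla q_h^n|^2\dx
\]
from above. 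If \(\kappa=0\) the right-hand side is zero, and this is exactly the unconditional stability statement.

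\textbf{Step 4 (traction term, the only genuinely new point).} The traction term has no sign and must be absorbed. Since \(0\le D^n\le\Lambda I\) is symmetric, \(|D^n\xi|\le\Lambda|\xi|\). Split the weight as \(m^{n-1}=\sqrt{m^{n-1}}\sqrt{m^{n-1}}\), which is legitimate because \(m^{n-1}\ge m_1>0\), and apply the weighted Cauchy--Schwarz and Young inequalities:
\[
\tau\kappa\bigl|(m^{n-1}D^n\nabla\rho_h^n,\nabla q_h^n)\bigr|
\le\frac{\tau}{2}\int_\Omega m^{n-1}|\nabla q_h^n|^2\dx+\frac{\Lambda^2}{2}\kappa^2\tau\int_\Omega m^{n-1}|\nabla\rho_h^n|^2\dx .
\]
Absorbing the first term into the dissipation leaves \(\frac{\tau}{2}\int_\Omega m^{n-1}|\nabla q_h^n|^2\dx\) on the left. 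This yields \eqref{eq:disc_energy_ineq_num} with \(C(\Lambda)=\Lambda^2/2\).

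The only care needed is the admissibility of \(\psi=\delta\rho\), which is why mass conservation must be established first. Everything else is pointwise algebra and does not use the growth conditions on \(W\).
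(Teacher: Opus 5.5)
Your proposal is correct and follows essentially the same route as the paper: you get mass conservation from \(\varphi\equiv1\), then test with \(q_h^n\) and with the mean-zero increment \(\rho_h^n-\rho_h^{n-1}\) (admissible by mass conservation), and combine polarization, the convex--concave inequalities, and a weighted Young inequality that absorbs half of the dissipation. Your explicit constant \(C(\Lambda)=\Lambda^2/2\) is a concrete instance of the paper's unspecified \(C(\Lambda)\).
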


\begin{proof}
Mass conservation follows from choosing \(\varphi\equiv1\) in \eqref{eq:scheme1_imex_num}.
For \eqref{eq:disc_energy_ineq_num}, test \eqref{eq:scheme1_imex_num} with \(\varphi=q_h^n\).
Moreover, by mass conservation,
\[
(\rho_h^n-\rho_h^{n-1},1)=0,
\]
hence \(\rho_h^n-\rho_h^{n-1}\in V_h^0\), and we may test
\eqref{eq:scheme2_imex_num} with \(\psi=\rho_h^n-\rho_h^{n-1}\).
Using the polarization identity for the gradient term and the convexity/concavity inequalities
\[
W_1'(b)(b-a)\ge W_1(b)-W_1(a),\qquad
W_2'(a)(b-a)\ge W_2(b)-W_2(a),
\]
we obtain
\[
\begin{aligned}
&\frac{1}{\tau}\bigl(E_h(\rho_h^n)-E_h(\rho_h^{n-1})\bigr)
+\frac{\varepsilon}{2\tau}\|\nabla(\rho_h^n-\rho_h^{n-1})\|_{L^2}^2
+\int_\Omega m^{n-1}|\nabla q_h^n|^2\dx
\\&\le -\kappa\int_\Omega m^{n-1}D^n\nabla\rho_h^n\cdot\nabla q_h^n\dx.
\end{aligned}
\]
Finally, \(0\le D^n\le \Lambda I\) and Young's inequality yield
\[
\kappa\Big|\int_\Omega m^{n-1}D^n\nabla\rho_h^n\cdot\nabla q_h^n\dx\Big|
\le \frac12\int_\Omega m^{n-1}|\nabla q_h^n|^2\dx
+ C(\Lambda)\kappa^2\int_\Omega m^{n-1}|\nabla\rho_h^n|^2\dx,
\]
Multiplying by \(\tau\) gives \eqref{eq:disc_energy_ineq_num}. If
\(\kappa=0\), the traction term is absent and the Young estimate is not used;
therefore the full dissipation coefficient \(\tau\) remains, which gives the
stated unconditional energy stability.
\end{proof}

\begin{theorem}[Existence of a discrete solution at time step \(n\)]\label{thm:exist_step_num}
Assume \eqref{eq:mob_assump_num} and let \(W=W_1+W_2\) be the convex--concave splitting described above.
Then there exists a constant \(\tau_e>0\), depending only on \(\varepsilon\), \(\kappa\), \(m_2\), and \(\Lambda\)
(and in particular independent of \(h\) and \(n\)), such that for every
\(0<\tau\le \tau_e\) and every \(\rho_h^{n-1}\in V_h\), there exists at least one pair
\((\rho_h^n,q_h^n)\in V_h\times V_h^0\) satisfying
\eqref{eq:scheme1_imex_num}--\eqref{eq:scheme2_imex_num}.
\end{theorem}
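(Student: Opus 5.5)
We plan to prove existence by a Brouwer fixed-point argument in its standard finite-dimensional form: if \(\mathcal F:X\to X\) is continuous on a finite-dimensional inner product space and \((\mathcal F(x),x)_X>0\) on a sphere \(\|x\|_X=R\), then \(\mathcal F\) has a zero in the ball. The explicit mobility \(m^{n-1}\) and the tensor \(D^n\) are frozen, so the only nonlinearity is the monotone term \(W_1'(\rho_h^n)\).

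\textbf{Reduction to a single unknown.} For \(\rho\in V_h\), the first equation \eqref{eq:scheme1_imex_num} determines \(q=\mathcal Q(\rho)\in V_h^0\) uniquely. This is a discrete weighted Neumann problem
\[
(m^{n-1}\nabla q,\nabla\varphi)=-\tfrac1\tau(\rho-\rho_h^{n-1},\varphi)-\kappa(m^{n-1}D^n\nabla\rho,\nabla\varphi),
\]
whose bilinear form is coercive on \(V_h^0\) by \(m_1>0\) and Poincaré. The problem is solvable only when the right-hand side vanishes at \(\varphi=1\), that is, when \((\rho,1)=(\rho_h^{n-1},1)\).

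We therefore work on the affine space \(\rho=\bar\rho+w\) with \(w\in V_h^0\) and \(\bar\rho\) the constant mean of \(\rho_h^{n-1}\). On this space \(\mathcal Q\) is affine and continuous in \(w\). Mass conservation then shows that testing with constants is automatic, so \eqref{eq:scheme1_imex_num} holds for all \(\varphi\in V_h\).

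Next define \(\mathcal F:V_h^0\to V_h^0\) by Riesz representation:
\[
(\mathcal F(w),\psi)=\varepsilon^{-1}\bigl(W_1'(\rho)+W_2'(\rho_h^{n-1}),\psi\bigr)+\varepsilon(\nabla\rho,\nabla\psi)-(\mathcal Q(\rho),\psi),\qquad\psi\in V_h^0 .
\]
A zero of \(\mathcal F\) is exactly a solution of \eqref{eq:scheme2_imex_num}. Continuity of \(\mathcal F\) follows from \(W\in C^2\) and the finite dimension.

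\textbf{Coercivity on a large sphere.} We test with \(\psi=w\) and treat each term separately.
\begin{itemize}
\item Convexity of \(W_1\) gives \(\varepsilon^{-1}(W_1'(\rho),w)\ge \varepsilon^{-1}(W_1'(\bar\rho),w)=0\), since \(W_1'(\bar\rho)\) is constant and \(w\) has mean zero.
\item The gradient part gives \(\varepsilon\|\nabla w\|^2\).
\item The \(W_2'(\rho_h^{n-1})\) term is bounded by \(C\|w\|\).
\item The key term is \(-(\mathcal Q(\rho),w)\). We test the \(q\)-equation with \(\varphi=-\tau q\) and also use the definition of \(\mathcal Q\) with \(\varphi=w\):
\[
(q,w)=(q,\rho-\rho_h^{n-1})+(q,\rho_h^{n-1}-\bar\rho)=-\tau(m^{n-1}\nabla q,\nabla q)-\tau\kappa(m^{n-1}D^n\nabla\rho,\nabla q)+(q,\rho_h^{n-1}-\bar\rho).
\]
Hence
\[
-(q,w)=\tau\|\sqrt{m^{n-1}}\nabla q\|^2+\tau\kappa(m^{n-1}D^n\nabla w,\nabla q)-(q,\rho_h^{n-1}-\bar\rho).
\]
\end{itemize}

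\textbf{Main obstacle.} The difficulty is the indefinite cross term \(\tau\kappa(m^{n-1}D^n\nabla w,\nabla q)\). We absorb it by Young's inequality:
\[
\tau\kappa(m^{n-1}D^n\nabla w,\nabla q)\ge-\tfrac\tau2\|\sqrt{m^{n-1}}\nabla q\|^2-\tfrac{\tau\kappa^2\Lambda^2m_2}{2}\|\nabla w\|^2 .
\]
This is harmless against \(\varepsilon\|\nabla w\|^2\) provided \(\tau\kappa^2\Lambda^2m_2\le\varepsilon\). This condition defines \(\tau_e=\varepsilon/(\kappa^2\Lambda^2m_2)\), which is independent of \(h\) and \(n\), as the statement requires.

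The remaining term \((q,\rho_h^{n-1}-\bar\rho)\) is linear in \(q\). We bound it by \(\|\nabla q\|\,C\|\rho_h^{n-1}-\bar\rho\|\) using Poincaré on \(V_h^0\), and absorb it into \(\tfrac\tau4 m_1\|\nabla q\|^2\) plus a constant \(C(\tau,h\text{-independent data})\).

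Altogether,
\[
(\mathcal F(w),w)\ge\tfrac\varepsilon2\|\nabla w\|^2-C\|w\|-C .
\]
By Poincaré, \(\|\nabla w\|\) is a norm on \(V_h^0\), so \((\mathcal F(w),w)>0\) for \(\|\nabla w\|=R\) with \(R\) large. The Brouwer corollary then gives a zero \(w\), and \((\rho_h^n,q_h^n)=(\bar\rho+w,\mathcal Q(\bar\rho+w))\) is the desired solution.
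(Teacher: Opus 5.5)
Your proof is correct, but it takes a different route from the paper.

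\textbf{The paper's route.} The paper keeps the saddle-point structure. It applies the Brouwer corollary to the coupled pair \((y,q)\in V_h^0\times V_h^0\), with \(y=\rho-\rho_h^{n-1}\) the increment. It composes the residual with the transform \((\phi,\psi)\mapsto(-\tau\psi,\phi)\), so that \((\mathcal G_h(z),z)_{X_h}\) is exactly the computation behind the discrete energy inequality. Coercivity then comes from three ingredients: the convex--concave bound producing \(E_h(\rho)-E_h(\rho_h^{n-1})\), the lower bound \(W\ge -c_0\), and the splitting \(\|\nabla\rho\|^2\le 2\|\nabla y\|^2+2\|\nabla\rho_h^{n-1}\|^2\) in the traction term.

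\textbf{Your route.} You use that the frozen mobility makes the first equation a uniquely solvable weighted Neumann problem on the mass-compatible affine space. You eliminate \(q=\mathcal Q(\rho)\), a Schur-complement reduction. You then test the second equation with the deviation from the mean, \(w=\rho-\bar\rho\), rather than with the increment.

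\textbf{What each buys.} Your argument needs only monotonicity of \(W_1'\) about the constant \(\bar\rho\); it uses no lower bound on \(W\) and no energy functional. Because \(\nabla w=\nabla\rho\), the cross term is absorbed directly, which gives the explicit \(\tau_e=\varepsilon/(\kappa^2\Lambda^2m_2)\). The price is the extra term \((q,\rho_h^{n-1}-\bar\rho)\), which you absorb with a \(\tau^{-1}\)-dependent constant, so your lower bound is not an energy estimate. This is harmless for existence at a fixed step. The paper's version, by contrast, lets existence and stability share a single computation.

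\textbf{Two cosmetic points.} First, the Brouwer corollary as you state it uses the sphere of the norm induced by the inner product defining \(\mathcal F\), which here is the \(L^2\)-norm, not \(\|\nabla w\|\). Your bound \(\tfrac{\varepsilon}{2}\|\nabla w\|^2-C\|w\|-C\) is positive on large \(L^2\)-spheres too, by Poincar\'e, so nothing changes. Second, for \(\kappa=0\) or \(\Lambda=0\) you should simply note that no step restriction is needed.
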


\begin{proof}
We argue by a direct coupled Brouwer-type argument.

\smallskip
\noindent\textit{Step 1: Reduction to a mean-zero increment.}
Let \(y_h^n:=\rho_h^n-\rho_h^{n-1}\).
Testing \eqref{eq:scheme1_imex_num} with \(\varphi\equiv 1\) shows that any solution satisfies
\[
(y_h^n,1)=0.
\]
Hence the unknown increment belongs to the mean-zero space \(V_h^0\), and conversely every
\(y\in V_h^0\) defines a mass-conserving candidate \(\rho=\rho_h^{n-1}+y\).
Therefore it is enough to solve for
\[
(y,q)\in X_h:=V_h^0\times V_h^0,
\qquad \rho:=\rho_h^{n-1}+y.
\]
We equip \(X_h\) with the Hilbert structure
\[
((y,q),(\phi,\psi))_{X_h}:=(\nabla y,\nabla\phi)+(\nabla q,\nabla\psi).
\]
Since all functions in \(V_h^0\) have zero mean, Poincar\'e's inequality implies that this scalar product
induces a norm equivalent to the \(H^1(\Omega)\times H^1(\Omega)\) norm on \(X_h\).

\smallskip
\noindent\textit{Step 2: Residual operator.}
For \(z=(y,q)\in X_h\), define \(\mathcal F_h(z)\in X_h'\) by
\begin{equation}\label{eq:Fh_def}
\begin{aligned}
\langle \mathcal F_h(y,q),(\phi,\psi)\rangle
:=\;&
\frac1\tau (y,\phi)
+\bigl(m^{n-1}\nabla q,\nabla\phi\bigr)
+\kappa\bigl(m^{n-1}D^n\nabla\rho,\nabla\phi\bigr)
\\ &\;
+\,(q,\psi)
-\varepsilon^{-1}\bigl(W_1'(\rho)+W_2'(\rho_h^{n-1}),\psi\bigr)
-\varepsilon\bigl(\nabla\rho,\nabla\psi\bigr)
\end{aligned}
\end{equation}
for all \((\phi,\psi)\in X_h\).
Since \(X_h\) is finite-dimensional and all nonlinearities are continuous, \(\mathcal F_h\) is continuous.
By construction, \(\mathcal F_h(y,q)=0\) is equivalent to
\eqref{eq:scheme1_imex_num}--\eqref{eq:scheme2_imex_num} on \(V_h^0\).
Because the constant test function in \eqref{eq:scheme1_imex_num} is already incorporated through \((y,1)=0\),
this is equivalent to the full scheme on \(V_h\times V_h^0\).

\smallskip
\noindent\textit{Step 3: Energy-testing transform.}
Define the linear isomorphism
\(\mathcal T_h:X_h\to X_h\)
by
\[
\mathcal T_h(\phi,\psi):=(-\tau\psi,\phi).
\]
For every \(z\in X_h\), let \(\mathcal G_h(z)\in X_h\) be the unique element satisfying
\begin{equation}\label{eq:Gh_def}
(\mathcal G_h(z),w)_{X_h}
=
-\langle \mathcal F_h(z),\mathcal T_h w\rangle
\qquad\forall w\in X_h.
\end{equation}
The existence and uniqueness of \(\mathcal G_h(z)\) follow from the Riesz representation theorem on the finite-dimensional Hilbert space \(X_h\).
The map \(\mathcal G_h:X_h\to X_h\) is continuous.
Moreover, since \(\mathcal T_h\) is invertible, we have
\[
\mathcal G_h(z)=0
\quad\Longleftrightarrow\quad
\langle \mathcal F_h(z),\widetilde w\rangle=0
\ \ \forall \widetilde w\in X_h
\quad\Longleftrightarrow\quad
\mathcal F_h(z)=0.
\]
Hence zeros of \(\mathcal G_h\) are exactly the discrete solutions.
We now estimate \((\mathcal G_h(z),z)_{X_h}\). By \eqref{eq:Gh_def} and \eqref{eq:Fh_def},
for \(z=(y,q)\in X_h\) with \(\rho=\rho_h^{n-1}+y\),
\begin{equation}\label{eq:Gzz_start}
\begin{aligned}
(\mathcal G_h(z),z)_{X_h}
&=
-\langle \mathcal F_h(y,q),(-\tau q,y)\rangle
\\
&=
\varepsilon^{-1}\bigl(W_1'(\rho)+W_2'(\rho_h^{n-1}),\,\rho-\rho_h^{n-1}\bigr)
+\varepsilon\bigl(\nabla\rho,\nabla(\rho-\rho_h^{n-1})\bigr)
\\ &\quad
+\tau\int_\Omega m^{n-1}|\nabla q|^2\dx
+\tau\kappa\int_\Omega m^{n-1}D^n\nabla\rho\cdot\nabla q\dx.
\end{aligned}
\end{equation}

\smallskip
\noindent\textit{Step 4: Lower bound by the discrete energy.}
Using the convexity of \(W_1\) and the concavity of \(W_2\), we obtain
\begin{equation}\label{eq:split_est_exist}
\varepsilon^{-1}\bigl(W_1'(\rho)+W_2'(\rho_h^{n-1}),\rho-\rho_h^{n-1}\bigr)
\ge
\varepsilon^{-1}\int_\Omega \bigl(W(\rho)-W(\rho_h^{n-1})\bigr)\dx.
\end{equation}
Moreover, the polarization identity yields
\begin{equation}\label{eq:grad_pol_exist}
\varepsilon\bigl(\nabla\rho,\nabla(\rho-\rho_h^{n-1})\bigr)
=
\frac{\varepsilon}{2}\|\nabla\rho\|_{L^2(\Omega)}^2
-\frac{\varepsilon}{2}\|\nabla\rho_h^{n-1}\|_{L^2(\Omega)}^2
+\frac{\varepsilon}{2}\|\nabla y\|_{L^2(\Omega)}^2.
\end{equation}
Combining \eqref{eq:Gzz_start}--\eqref{eq:grad_pol_exist} gives
\begin{equation}\label{eq:Gzz_energy}
\begin{aligned}
(\mathcal G_h(z),z)_{X_h}
\ge\;&
E_h(\rho)-E_h(\rho_h^{n-1})
+\frac{\varepsilon}{2}\|\nabla y\|_{L^2(\Omega)}^2
+\tau\int_\Omega m^{n-1}|\nabla q|^2\dx
\\ &\;
+\tau\kappa\int_\Omega m^{n-1}D^n\nabla\rho\cdot\nabla q\dx.
\end{aligned}
\end{equation}
For the traction term, we use \(0\le D^n\le \Lambda I\) a.e.\ and Young's inequality:
\[
\begin{aligned}
\tau\kappa\Big|\int_\Omega m^{n-1}D^n\nabla\rho\cdot\nabla q\dx\Big|
&\le
\tau\kappa\Lambda\int_\Omega m^{n-1}|\nabla\rho||\nabla q|\dx
\\
&\le
\frac{\tau}{2}\int_\Omega m^{n-1}|\nabla q|^2\dx
+
C(\Lambda)\tau\kappa^2\int_\Omega m^{n-1}|\nabla\rho|^2\dx .
\end{aligned}
\]
Since \(m^{n-1}\le m_2\), we further have
\[
\int_\Omega m^{n-1}|\nabla\rho|^2\dx
\le
m_2\|\nabla\rho\|_{L^2(\Omega)}^2
\le
2m_2\|\nabla y\|_{L^2(\Omega)}^2
+
2m_2\|\nabla\rho_h^{n-1}\|_{L^2(\Omega)}^2.
\]
Therefore, from \eqref{eq:Gzz_energy},
\[
\begin{aligned}
(\mathcal G_h(z),z)_{X_h}
\ge\;&
E_h(\rho)-E_h(\rho_h^{n-1})
+
\Bigl(\frac{\varepsilon}{2}-C_0\tau\kappa^2\Bigr)\|\nabla y\|_{L^2(\Omega)}^2
\\ &\;
+\frac{\tau}{2}\int_\Omega m^{n-1}|\nabla q|^2\dx
-
C_0\tau\kappa^2\|\nabla\rho_h^{n-1}\|_{L^2(\Omega)}^2,
\end{aligned}
\]
with a constant \(C_0>0\) depending only on \(m_2\) and \(\Lambda\).
By Assumption~\ref{ass:num_model}(3) we have \(W(s)\ge -c_0\), hence \(E_h(\rho)\ge -C\) uniformly in \(\rho\).
Moreover, \(m^{n-1}\ge m_1>0\), and thus
\begin{equation}\label{eq:Gzz_coercive}
(\mathcal G_h(z),z)_{X_h}
\ge
\Bigl(\frac{\varepsilon}{2}-C_0\tau\kappa^2\Bigr)\|\nabla y\|_{L^2(\Omega)}^2
+\frac{\tau m_1}{2}\|\nabla q\|_{L^2(\Omega)}^2
-C_n,
\end{equation}
where
\[
C_n:=E_h(\rho_h^{n-1})+C+C_0\tau\kappa^2\|\nabla\rho_h^{n-1}\|_{L^2(\Omega)}^2.
\]
Choose now
\[
\tau_e:=
\begin{cases}
\displaystyle \frac{\varepsilon}{4C_0\kappa^2}, & \kappa>0,\\[0.6em]
+\infty, & \kappa=0.
\end{cases}
\]
If \(\kappa=0\), the traction term vanishes and the coercivity estimate holds for every \(\tau>0\), so no step restriction is needed.
Then, for every \(0<\tau\le \tau_e\), \eqref{eq:Gzz_coercive} implies
\[
(\mathcal G_h(z),z)_{X_h}
\ge
\frac{\varepsilon}{4}\|\nabla y\|_{L^2(\Omega)}^2
+\frac{\tau m_1}{2}\|\nabla q\|_{L^2(\Omega)}^2
-C_n.
\]
By Poincar\'e's inequality on \(V_h^0\), the \(X_h\)-norm is equivalent to
\[
\|(y,q)\|_{X_h}^2
\sim
\|\nabla y\|_{L^2(\Omega)}^2+\|\nabla q\|_{L^2(\Omega)}^2.
\]
Hence
\(
(\mathcal G_h(z),z)_{X_h}\to\infty\) as \(\|z\|_{X_h}\to\infty.
\)

\smallskip
\noindent\textit{Step 5: Application of Brouwer's theorem.}
Choose \(R>0\) so large that
\[
(\mathcal G_h(z),z)_{X_h}>0
\qquad\forall z\in X_h \text{ with }\|z\|_{X_h}=R.
\]
Since \(X_h\) is finite-dimensional and \(\mathcal G_h\) is continuous, a standard
corollary of Brouwer's fixed-point theorem (see, e.g., \cite[Chapter~9.1]{evans2022partial})
implies that \(\mathcal G_h\) has a zero in the closed ball \(\overline B_R(0)\subset X_h\).
Hence there exists \((y_h^n,q_h^n)\in X_h\) such that
\(
\mathcal G_h(y_h^n,q_h^n)=0,
\)
and therefore also \(\mathcal F_h(y_h^n,q_h^n)=0\).
Finally, set
\(
\rho_h^n:=\rho_h^{n-1}+y_h^n.
\)
Then \((\rho_h^n,q_h^n)\in V_h\times V_h^0\) solves
\eqref{eq:scheme1_imex_num}--\eqref{eq:scheme2_imex_num}.
\end{proof}

\begin{remark}[Uniqueness]
For \(\kappa=0\), uniqueness of the fully implicit convex-splitting step is standard under mild additional assumptions ensuring strict monotonicity of the discrete chemical-potential operator.
For \(\kappa>0\), uniqueness of the fully coupled step typically requires a mild time-step restriction depending on \(\kappa\), \(\Lambda\), and \(\|m^{n-1}\|_{L^\infty(\Omega)}\).
Since existence is sufficient for the purposes of the present work, we do not pursue a full uniqueness analysis here.
\end{remark}

\subsection{Convergence to the regularized problem}\label{subsec:conv}

In this subsection we prove convergence of the fully discrete scheme
\eqref{eq:scheme1_imex_num}--\eqref{eq:scheme2_imex_num} to a weak solution of the regularized problem. Throughout this subsection, Assumption~\ref{ass:num_model} and \eqref{eq:mob_assump_num} are in force.

Since the discrete variable \(q_h^n\in V_h^0\) is the mean-zero representative of the
chemical potential, we introduce the full discrete chemical potential by
\begin{equation}\label{eq:mu_full_discrete}
\bar\mu_h^n
:=
\frac{\varepsilon^{-1}}{|\Omega|}
\int_\Omega \Bigl(W_1'(\rho_h^n)+W_2'(\rho_h^{n-1})\Bigr)\dx,
\qquad
\mu_h^n:=q_h^n+\bar\mu_h^n.
\end{equation}
By mass conservation (Theorem~\ref{thm:disc_structure_num}) and the splitting identity \eqref{eq:splitting_identity_num}, the mean value \(\bar\mu_h^n\) admits the equivalent representation
\[
\bar\mu_h^n
= \frac{\varepsilon^{-1}}{|\Omega|}\int_\Omega W'(\rho_h^n)\dx
+ \frac{\varepsilon^{-1} c_1}{|\Omega|}\int_\Omega(\rho_h^n-\rho_h^{n-1})\dx
= \frac{\varepsilon^{-1}}{|\Omega|}\int_\Omega W'(\rho_h^n)\dx,
\]
which is the natural discrete analogue of the continuum mean of \(\mu\).
Then \(\nabla\mu_h^n=\nabla q_h^n\), and \eqref{eq:scheme2_imex_num} is equivalent to
\begin{equation}\label{eq:scheme2_mu_full}
(\mu_h^n,\psi_h)
=
\varepsilon^{-1}\bigl(W_1'(\rho_h^n)+W_2'(\rho_h^{n-1}),\psi_h\bigr)
+\varepsilon(\nabla\rho_h^n,\nabla\psi_h)
\qquad\forall \psi_h\in V_h .
\end{equation}

For \(t\in(t_{n-1},t_n]\), define the standard time reconstructions
\begin{align}
\rho_{h,\tau}(t):=\rho_h^n, \quad \rho_{h,\tau}^-(t):=\rho_h^{n-1}, \quad q_{h,\tau}(t):=q_h^n, \quad \mu_{h,\tau}(t):=\mu_h^n, \quad D_\tau(t):=D^n,
\label{eq:reconstructions_pc}
\end{align}
and the piecewise affine reconstruction
\begin{equation}\label{eq:reconstruction_affine}
\widehat\rho_{h,\tau}(t)
:=
\frac{t-t_{n-1}}{\tau}\rho_h^n+\frac{t_n-t}{\tau}\rho_h^{n-1}
\qquad (t\in[t_{n-1},t_n]).
\end{equation}
Then
\begin{equation}\label{eq:reconstruction_dt}
\partial_t\widehat\rho_{h,\tau}(t)=\frac{\rho_h^n-\rho_h^{n-1}}{\tau}
\qquad\text{for }t\in(t_{n-1},t_n].
\end{equation}

We now state the weak formulation of the regularized problem.

\begin{definition}[Weak solution of the regularized problem]\label{def:weak_reg}
A pair \((\rho,\mu)\) is called a weak solution of the regularized problem if
\[
\rho\in L^\infty(0,T;H^1(\Omega))\cap H^1(0,T;H^{-1}(\Omega)),
\qquad
\mu\in L^2(0,T;H^1(\Omega)),
\]
\(W'(\rho)\in L^2(\Omega_T)\), \(\rho(0)=\rho_0\) in \(L^2(\Omega)\), and
\begin{equation}\label{eq:weak_reg_mass}
\int_0^T \langle \partial_t\rho,\varphi\rangle\,dt
+\int_0^T\!\!\int_\Omega m(\rho)\nabla\mu\cdot\nabla\varphi \dx\dt
+\kappa\int_0^T\!\!\int_\Omega m(\rho)D(\nu)\nabla\rho\cdot\nabla\varphi \dx\dt
=0
\end{equation}
for all \(\varphi\in L^2(0,T;H^1(\Omega))\), and
\begin{equation}\label{eq:weak_reg_mu}
\int_0^T\!\!\int_\Omega \mu\,\psi \dx\dt
=
\varepsilon^{-1}\int_0^T\!\!\int_\Omega W'(\rho)\psi \dx\dt
+\varepsilon\int_0^T\!\!\int_\Omega \nabla\rho\cdot\nabla\psi \dx\dt
\end{equation}
for all \(\psi\in L^2(0,T;H^1(\Omega))\).
\end{definition}

We denote by \(P_h:L^2(\Omega)\to V_h\) the \(L^2\)-orthogonal projection,
\[
(P_h v,\chi_h)=(v,\chi_h)\qquad\forall \chi_h\in V_h .
\]
Since the triangulations are assumed to be shape-regular and quasi-uniform,
\(P_h\) is stable in \(H^1(\Omega)\); see, for instance,
\cite{BramblePasciakSteinbach2002}. Hence
\begin{equation}\label{eq:projection_properties}
\|P_h v\|_{H^1(\Omega)}\le C\|v\|_{H^1(\Omega)}
\qquad\forall v\in H^1(\Omega),
\end{equation}
with a constant independent of \(h\). Moreover,
\begin{equation}\label{eq:projection_convergence}
P_h v\to v \quad\text{strongly in }H^1(\Omega)
\qquad\text{for every }v\in H^1(\Omega).
\end{equation}
%Consequently, also
%\[
%P_h v\to v
%\quad\text{strongly in }L^2(0,T;H^1(\Omega))
%\qquad\text{for every }v\in L^2(0,T;H^1(\Omega)).
%\]

The first step is a uniform discrete a priori estimate.

\begin{lemma}[Uniform bounds]\label{lem:uniform_bounds_conv}
Assume Assumption~\ref{ass:num_model}, \eqref{eq:mob_assump_num}, and let
\((\rho_h^n,q_h^n)_{n=0}^N\) be a discrete solution of
\eqref{eq:scheme1_imex_num}--\eqref{eq:scheme2_imex_num}, with \(\tau\le\tau_*\) where $\tau_*$ depends only on $\varepsilon$, $\kappa$, $m_1$, $m_2$, $\Lambda$ and $\Omega$.
Assume further that
\begin{equation}\label{eq:init_conv_ass}
\rho_h^0\to \rho_0 \quad\text{strongly in }H^1(\Omega),
\qquad
\sup_{h>0} E_h(\rho_h^0)<\infty.
\end{equation}
Then there exists a constant \(C>0\), independent of \(h\) and \(\tau\), such that
\begin{align}
\max_{0\le n\le N}\|\rho_h^n\|_{H^1(\Omega)} &\le C,
\label{eq:uniform_rho_H1_discrete}
\\
\tau\sum_{n=1}^N \|q_h^n\|_{H^1(\Omega)}^2 &\le C,
\label{eq:uniform_q_H1_discrete}
\\
\tau\sum_{n=1}^N \|\mu_h^n\|_{H^1(\Omega)}^2 &\le C,
\label{eq:uniform_mu_H1_discrete}
\\
\|\widehat\rho_{h,\tau}\|_{L^\infty(0,T;H^1(\Omega))} &\le C,
\label{eq:uniform_rhohat_H1}
\\
\|\mu_{h,\tau}\|_{L^2(0,T;H^1(\Omega))}+\|q_{h,\tau}\|_{L^2(0,T;H^1(\Omega))} &\le C.
\label{eq:uniform_muqtau_H1}
\end{align}
\end{lemma}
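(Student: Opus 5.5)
The plan is to sum the discrete energy inequality \eqref{eq:disc_energy_ineq_num} of Theorem~\ref{thm:disc_structure_num} over the time steps and close it with a discrete Gronwall argument. The perturbation on the right-hand side is bounded using \(m^{n-1}\le m_2\):
\[
C(\Lambda)\kappa^2\tau\int_\Omega m^{n-1}|\nabla\rho_h^n|^2\dx\le C(\Lambda)\kappa^2 m_2\tau\|\nabla\rho_h^n\|_{L^2}^2 .
\]
Since \(W\ge -c_0\), we have
\[
\|\nabla\rho_h^n\|_{L^2}^2\le \tfrac{2}{\varepsilon}\bigl(E_h(\rho_h^n)+\varepsilon^{-1}c_0|\Omega|\bigr).
\]
Setting \(a_n:=E_h(\rho_h^n)+\varepsilon^{-1}c_0|\Omega|\ge0\), this yields
\[
a_n+\frac{\tau}{2}\int_\Omega m^{n-1}|\nabla q_h^n|^2\dx\le a_{n-1}+\gamma\tau a_n,
\qquad \gamma:=2C(\Lambda)\kappa^2m_2/\varepsilon .
\]
Choose \(\tau_*\) so that \(\gamma\tau_*\le 1/2\), and also \(\tau_*\le\tau_e\), so that discrete solutions exist. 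The implicit discrete Gronwall lemma then gives
\[
\max_n a_n+\tau\sum_{n=1}^N\int_\Omega m^{n-1}|\nabla q_h^n|^2\dx\le C(T)\bigl(a_0+1\bigr).
\]
Here \(a_0\) is bounded uniformly by \eqref{eq:init_conv_ass}. With \(m^{n-1}\ge m_1\), this gives \(\max_n\|\nabla\rho_h^n\|_{L^2}\le C\) and \(\tau\sum_n\|\nabla q_h^n\|_{L^2}^2\le C\).

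The \(L^2\) part of the \(H^1\) norm of \(\rho_h^n\) comes from mass conservation. The mean \((\rho_h^n,1)=(\rho_h^0,1)\) is bounded because \(\rho_h^0\to\rho_0\) in \(H^1\). Poincar\'e--Wirtinger then gives \eqref{eq:uniform_rho_H1_discrete}. For \(q_h^n\in V_h^0\), Poincar\'e's inequality turns the gradient bound into \eqref{eq:uniform_q_H1_discrete}.

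For \eqref{eq:uniform_mu_H1_discrete} it remains to control the constant \(\bar\mu_h^n\). By the representation after \eqref{eq:mu_full_discrete}, \(|\bar\mu_h^n|\le C\|W'(\rho_h^n)\|_{L^1}\le C(1+\|\rho_h^n\|_{L^{p-1}}^{p-1})\). The growth condition \(p<4\) in \(d=3\) (arbitrary \(p\) in \(d=2\)) and the embedding \(H^1\hookrightarrow L^6\), respectively \(L^r\) for all \(r<\infty\), bound this uniformly in \(n,h,\tau\) by the \(H^1\) bound on \(\rho_h^n\). Hence \(\tau\sum_n|\bar\mu_h^n|^2|\Omega|\le CT\), and \eqref{eq:uniform_mu_H1_discrete} follows since \(\nabla\mu_h^n=\nabla q_h^n\).

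The reconstruction bounds are then immediate. \(\widehat\rho_{h,\tau}(t)\) is a convex combination of \(\rho_h^{n-1}\) and \(\rho_h^n\), so \eqref{eq:uniform_rhohat_H1} follows from \eqref{eq:uniform_rho_H1_discrete}. The piecewise constant \(L^2(0,T;H^1)\) norms of \(\mu_{h,\tau}\) and \(q_{h,\tau}\) coincide with the discrete sums, which gives \eqref{eq:uniform_muqtau_H1}.

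The main obstacle is the Gronwall step. The perturbation involves \(\nabla\rho_h^n\) at the new time level, so the step restriction \(\tau\le\tau_*\) must absorb it; this is why \(\tau_*\) depends on \(\varepsilon,\kappa,m_2,\Lambda\). The resulting constant grows like \(e^{CT\kappa^2}\) but is independent of \(h,\tau\). A secondary point is ensuring that the bound on \(E_h(\rho_h^0)\) really follows from \eqref{eq:init_conv_ass}; this is assumed there directly.
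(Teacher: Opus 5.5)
Your proposal is correct and follows essentially the same route as the paper. It sums the discrete energy inequality, absorbs the new-time-level traction term through the step restriction $\tau\le\tau_*$ and an implicit discrete Gr\"onwall argument, and recovers the $L^2$ part of the $H^1$ norm from mass conservation and Poincar\'e. It controls $\bar\mu_h^n$ through the growth of $W'$ and the Sobolev embedding, and reads off the reconstruction bounds directly. Your shift to the nonnegative quantity $a_n=E_h(\rho_h^n)+\varepsilon^{-1}c_0|\Omega|$ is, if anything, slightly cleaner than the paper's write-up: its step $E_h(\rho_h^n)\le(1+2C\tau)E_h(\rho_h^{n-1})+2C\tau$ tacitly assumes a nonnegative energy, whereas your version stays valid when $E_h$ is negative.
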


\begin{proof}
By Theorem~\ref{thm:disc_structure_num},
\[
E_h(\rho_h^n)
+\frac{\tau}{2}\int_\Omega m^{n-1}|\nabla q_h^n|^2\dx
\le
E_h(\rho_h^{n-1})
+ C(\Lambda)\kappa^2\tau\int_\Omega m^{n-1}|\nabla\rho_h^n|^2\dx .
\]
Using \(m^{n-1}\le m_2\) and the lower bound \(W(s)\ge -c_0\), we obtain
\[
E_h(\rho_h^n)\ge \frac{\varepsilon}{2}\|\nabla\rho_h^n\|_{L^2(\Omega)}^2-C,
\]
and therefore
\[
E_h(\rho_h^n)
\le
E_h(\rho_h^{n-1})+C\tau\bigl(1+E_h(\rho_h^n)\bigr).
\]
Hence, for \(\tau\le \tau_*:=\min(\tau_e,\tfrac{1}{2C})\),
\[
E_h(\rho_h^n)\le (1+2C\tau)E_h(\rho_h^{n-1})+2C\tau,
\]
and the discrete Grönwall lemma yields
\[
\max_{0\le n\le N} E_h(\rho_h^n)\le C.
\]
Since \(E_h\) controls the \(H^1\)-seminorm of \(\rho_h^n\), and mass is conserved by
Theorem~\ref{thm:disc_structure_num}, Poincar\'e's inequality implies
\eqref{eq:uniform_rho_H1_discrete}. Moreover, because \(m^{n-1}\ge m_1>0\),
\[
\tau\sum_{n=1}^N \|\nabla q_h^n\|_{L^2(\Omega)}^2\le C.
\]
Since \(q_h^n\in V_h^0\), Poincar\'e's inequality yields \eqref{eq:uniform_q_H1_discrete}.

It remains to estimate the mean value of \(\mu_h^n\). By definition,
\[
\bar\mu_h^n
=
\frac{\varepsilon^{-1}}{|\Omega|}
\int_\Omega \Bigl(W_1'(\rho_h^n)+W_2'(\rho_h^{n-1})\Bigr)\dx.
\]
By the growth assumptions on \(W\), the corresponding growth bounds for \(W_1'\) and
\(W_2'\), and the Sobolev embedding \(H^1(\Omega)\hookrightarrow L^{p-1}(\Omega)\)
(which holds under Assumption~\ref{ass:num_model}(3) for both \(d=2\) and \(d=3\)),
we infer
\[
|\bar\mu_h^n|
\le
C\Bigl(1+\|\rho_h^n\|_{L^{p-1}(\Omega)}^{p-1}
+\|\rho_h^{n-1}\|_{L^{p-1}(\Omega)}^{p-1}\Bigr)
\le C.
\]
Therefore
\[
\tau\sum_{n=1}^N |\bar\mu_h^n|^2\le C.
\]
Since \(\mu_h^n=q_h^n+\bar\mu_h^n\), this implies
\eqref{eq:uniform_mu_H1_discrete}. Finally,
\eqref{eq:uniform_rhohat_H1} and \eqref{eq:uniform_muqtau_H1} follow directly from the
definitions of the time reconstructions.
\end{proof}

The next lemma yields the compactness required for the limit process.

\begin{lemma}[Time derivative bound and compactness]\label{lem:compactness_conv}
Under the assumptions of Lemma~\ref{lem:uniform_bounds_conv}, there exists a constant
\(C>0\), independent of \(h\) and \(\tau\), such that
\begin{equation}\label{eq:dt_rhohat_bound}
\|\partial_t\widehat\rho_{h,\tau}\|_{L^2(0,T;H^{-1}(\Omega))}\le C.
\end{equation}
Consequently, there exist a subsequence (not relabelled) and a function
\[
\rho\in L^\infty(0,T;H^1(\Omega))\cap H^1(0,T;H^{-1}(\Omega))
\]
such that
\begin{align}
\widehat\rho_{h,\tau}&\rightharpoonup^\ast \rho
&&\text{in }L^\infty(0,T;H^1(\Omega)),
\label{eq:rhohat_weakstar_conv}
\\
\partial_t\widehat\rho_{h,\tau}&\rightharpoonup \partial_t\rho
&&\text{in }L^2(0,T;H^{-1}(\Omega)),
\label{eq:dt_rhohat_weak_conv}
\\
\widehat\rho_{h,\tau}&\to \rho
&&\text{strongly in }C([0,T];L^2(\Omega)).
\label{eq:rhohat_strong_C_L2}
\end{align}
For the piecewise-constant reconstructions, we additionally have
\begin{align}
\rho_{h,\tau}&\rightharpoonup^\ast \rho,
&
\rho_{h,\tau}^-&\rightharpoonup^\ast \rho
&&\text{in }L^\infty(0,T;H^1(\Omega)),
\label{eq:rho_pc_weakstar_conv}
\\
\rho_{h,\tau}&\to \rho,
&
\rho_{h,\tau}^-&\to \rho
&&\text{strongly in }L^2(\Omega_T),
\label{eq:rho_pc_strong_L2}
\end{align}
and, under Assumption~\ref{ass:num_model}(3),
\begin{align}
\rho_{h,\tau}\to \rho,\qquad
\rho_{h,\tau}^-\to \rho
\qquad\text{strongly in }L^2(0,T;L^{2(p-1)}(\Omega)).
\label{eq:rho_pc_strong_L2Lp}
\end{align}
\end{lemma}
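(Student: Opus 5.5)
The plan is to first prove the dual-norm bound \eqref{eq:dt_rhohat_bound} directly from the scheme. The compactness statements then follow from standard Aubin--Lions/Simon arguments together with an estimate of the difference between the piecewise constant and piecewise affine reconstructions.

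\textbf{Step 1: bound on the time derivative.} Fix \(v\in H^1(\Omega)\) and test \eqref{eq:scheme1_imex_num} with \(\varphi=P_hv\). By the defining property of the \(L^2\)-projection, \((\rho_h^n-\rho_h^{n-1},v)=(\rho_h^n-\rho_h^{n-1},P_hv)\). Using \eqref{eq:reconstruction_dt}, \(m^{n-1}\le m_2\), \(D^n\le\Lambda I\) and the $H^1$-stability \eqref{eq:projection_properties}, this gives
\[
\bigl|\langle\partial_t\widehat\rho_{h,\tau}(t),v\rangle\bigr|
\le m_2\bigl(\|\nabla q_h^n\|_{L^2}+\kappa\Lambda\|\nabla\rho_h^n\|_{L^2}\bigr)C\|v\|_{H^1}.
\]
Taking the supremum over \(v\), squaring and summing over \(n\) with weight \(\tau\), the bounds \eqref{eq:uniform_rho_H1_discrete}--\eqref{eq:uniform_q_H1_discrete} of Lemma~\ref{lem:uniform_bounds_conv} yield \eqref{eq:dt_rhohat_bound}. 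This is the step where quasi-uniformity is genuinely needed, through the $H^1$-stability of \(P_h\).

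\textbf{Step 2: limits of the affine reconstruction.} By \eqref{eq:uniform_rhohat_H1} and \eqref{eq:dt_rhohat_bound}, together with Banach--Alaoglu (the dual of a separable space), we extract a subsequence satisfying \eqref{eq:rhohat_weakstar_conv} and \eqref{eq:dt_rhohat_weak_conv}. Weak limits commute with distributional time derivatives, so the two limits are consistent. Since \(H^1\hookrightarrow\hookrightarrow L^2\hookrightarrow H^{-1}\), the Aubin--Lions--Simon lemma applies to the family bounded in \(L^\infty(0,T;H^1)\cap W^{1,2}(0,T;H^{-1})\). It gives relative compactness in \(C([0,T];L^2(\Omega))\), hence \eqref{eq:rhohat_strong_C_L2}.

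\textbf{Step 3: the piecewise constant reconstructions.} On \((t_{n-1},t_n]\) we have \(\rho_{h,\tau}-\widehat\rho_{h,\tau}=\frac{t_n-t}{\tau}(\rho_h^n-\rho_h^{n-1})\), and similarly for \(\rho^-_{h,\tau}\). It therefore suffices to show \(\tau\sum_n\|\rho_h^n-\rho_h^{n-1}\|_{L^2}^2\to0\). For this I would interpolate between \(H^{-1}\) and \(H^1\):
\[
\|y\|_{L^2}^2\le\|y\|_{H^{-1}}\|y\|_{H^1}.
\]
By Step 1, \(\|\rho_h^n-\rho_h^{n-1}\|_{H^{-1}}\le\tau g_n\) with \(\tau\sum g_n^2\le C\), and the \(H^1\)-norm of the increment is bounded by \(2C\). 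Cauchy--Schwarz then gives
\[
\tau\sum_n\|\rho_h^n-\rho_h^{n-1}\|_{L^2}^2\le C\tau\cdot\tau\sum_n g_n\le C\tau\sqrt{T}\Bigl(\tau\sum_n g_n^2\Bigr)^{1/2}=O(\tau).
\]
Combined with \eqref{eq:rhohat_strong_C_L2}, this yields \eqref{eq:rho_pc_strong_L2}. For the weak-star limits \eqref{eq:rho_pc_weakstar_conv}, use the uniform bound \eqref{eq:uniform_rho_H1_discrete}; any weak-star limit must coincide with \(\rho\) by the strong \(L^2\) convergence, so the whole subsequence converges.

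\textbf{Step 4: the stronger norm.} Let \(r:=2(p-1)\). Under Assumption~\ref{ass:num_model}(3) we have \(r<6\) when \(d=3\) and \(r<\infty\) when \(d=2\), so by Gagliardo--Nirenberg/Hölder interpolation
\[
\|w\|_{L^r}\le C\|w\|_{L^2}^{\theta}\|w\|_{H^1}^{1-\theta}
\quad\text{for some }\theta\in(0,1].
\]
Apply this to \(w=\rho_{h,\tau}-\rho\) and use the \(L^\infty(H^1)\) bound (the limit \(\rho\) inherits it by lower semicontinuity). Then
\[
\int_0^T\|w\|_{L^r}^2\dt\le C\int_0^T\|w\|_{L^2}^{2\theta}\dt\le C\|w\|_{L^2(\Omega_T)}^{2\theta}T^{1-\theta}\to0,
\]
which is \eqref{eq:rho_pc_strong_L2Lp}; the same argument applies to \(\rho^-_{h,\tau}\). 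The main point requiring care is Step 1's use of the projection, and in Step 4 the strict subcriticality \(p<4\), which guarantees \(\theta>0\).
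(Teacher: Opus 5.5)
Your proposal is correct and follows essentially the same route as the paper. It derives the $H^{-1}$ bound by testing with $P_h v$ and using the $H^1$-stability of the $L^2$-projection, applies Aubin--Lions to the affine reconstruction, uses the interpolation inequality $\|v\|_{L^2}^2\le\|v\|_{H^{-1}}\|v\|_{H^1}$ to compare the piecewise constant and affine reconstructions, and uses spatial interpolation with the uniform $L^\infty(0,T;H^1(\Omega))$ bound for the $L^2(0,T;L^{2(p-1)}(\Omega))$ convergence. The differences are only presentational: your Step 3 works with the discrete increments rather than with $\|\rho_{h,\tau}-\widehat\rho_{h,\tau}\|_{L^2(0,T;H^{-1}(\Omega))}\le C\tau$, and your Step 4 makes explicit the interpolation exponent that the paper leaves implicit.
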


\begin{proof}
Let \(\varphi\in H^1(\Omega)\). Since
\(\partial_t\widehat\rho_{h,\tau}(t)\in V_h\) for
\(t\in(t_{n-1},t_n]\), the \(L^2\)-orthogonality of \(P_h\) gives
\[
\left\langle \partial_t\widehat\rho_{h,\tau}(t),\varphi\right\rangle
=
\bigl(\partial_t\widehat\rho_{h,\tau}(t),\varphi\bigr)
=
\bigl(\partial_t\widehat\rho_{h,\tau}(t),P_h\varphi\bigr).
\]
Using \eqref{eq:scheme1_imex_num} with the test function
\(P_h\varphi\in V_h\), we therefore obtain, for
\(t\in(t_{n-1},t_n]\),
\[
\begin{aligned}
\left\langle \partial_t\widehat\rho_{h,\tau}(t),\varphi\right\rangle
&=
-\bigl(m^{n-1}\nabla q_h^n,\nabla P_h\varphi\bigr)
-\kappa\bigl(m^{n-1}D^n\nabla\rho_h^n,\nabla P_h\varphi\bigr).
\end{aligned}
\]
Using \eqref{eq:projection_properties}, \(m^{n-1}\le m_2\),
\(0\le D^n\le \Lambda I\), and Cauchy--Schwarz, we infer
\[
\left|
\left\langle \partial_t\widehat\rho_{h,\tau}(t),\varphi\right\rangle
\right|
\le
C\Bigl(
\|\nabla q_h^n\|_{L^2(\Omega)}
+
\|\nabla\rho_h^n\|_{L^2(\Omega)}
\Bigr)
\|\varphi\|_{H^1(\Omega)}.
\]
Hence
\[
\|\partial_t\widehat\rho_{h,\tau}(t)\|_{H^{-1}(\Omega)}
\le
C\Bigl(
\|\nabla q_h^n\|_{L^2(\Omega)}
+
\|\nabla\rho_h^n\|_{L^2(\Omega)}
\Bigr),
\]
and \eqref{eq:dt_rhohat_bound} follows from
\eqref{eq:uniform_rho_H1_discrete}--\eqref{eq:uniform_q_H1_discrete}.

Now \eqref{eq:uniform_rhohat_H1} and \eqref{eq:dt_rhohat_bound} imply, by the2 Aubin--Lions compactness theorem \cite[Lemma 7.10]{roubicek2005nonlinear},
\[
\widehat\rho_{h,\tau}\to \rho
\qquad\text{strongly in }C([0,T];L^2(\Omega)),
\]
after extraction of a subsequence. This yields
\eqref{eq:rhohat_weakstar_conv}--\eqref{eq:rhohat_strong_C_L2}. 

It remains to compare the piecewise constant and affine reconstructions. For
\(t\in(t_{n-1},t_n]\),
\[
\rho_{h,\tau}(t)-\widehat\rho_{h,\tau}(t)
=
\frac{t_n-t}{\tau}\bigl(\rho_h^n-\rho_h^{n-1}\bigr),
\]
and therefore, using \eqref{eq:reconstruction_dt},
\[
\|\rho_{h,\tau}-\widehat\rho_{h,\tau}\|_{L^2(0,T;H^{-1}(\Omega))}
\le
\tau \|\partial_t\widehat\rho_{h,\tau}\|_{L^2(0,T;H^{-1}(\Omega))}
\le C\tau \to 0.
\]
Likewise,
\[
\|\rho_{h,\tau}^- -\widehat\rho_{h,\tau}\|_{L^2(0,T;H^{-1}(\Omega))}
\le C\tau \to 0.
\]
Since \(\rho_{h,\tau}\), \(\rho_{h,\tau}^-\), and \(\widehat\rho_{h,\tau}\) are uniformly bounded in
\(L^\infty(0,T;H^1(\Omega))\), the interpolation inequality
\[
\|v\|_{L^2(\Omega)}^2
\le
\|v\|_{H^{-1}(\Omega)}\|v\|_{H^1(\Omega)}
\]
yields
\[
\|\rho_{h,\tau}-\widehat\rho_{h,\tau}\|_{L^2(\Omega_T)}\to0,
\qquad
\|\rho_{h,\tau}^- -\widehat\rho_{h,\tau}\|_{L^2(\Omega_T)}\to0.
\]
Together with \eqref{eq:rhohat_strong_C_L2}, this proves \eqref{eq:rho_pc_strong_L2}. Since \(\rho_{h,\tau}\), \(\rho_{h,\tau}^-\), and \(\widehat\rho_{h,\tau}\) are uniformly bounded in \(L^\infty(0,T;H^1(\Omega))\), the convergences \eqref{eq:rho_pc_weakstar_conv} follow, after extraction of subsequences, from Banach--Alaoglu and the identification of the limit through \eqref{eq:rho_pc_strong_L2}.

Finally, \eqref{eq:rho_pc_strong_L2Lp} follows from \eqref{eq:rho_pc_strong_L2},
the uniform \(L^\infty(0,T;H^1(\Omega))\)-bound, and interpolation in space.
Indeed, if \(d=3\), then Assumption~\ref{ass:num_model}(3) implies \(2(p-1)<6\), and if \(d=2\),
the embedding \(H^1(\Omega)\hookrightarrow L^r(\Omega)\) holds for every finite \(r\).
\end{proof}

We can now pass to the limit in the discrete scheme.

\begin{theorem}[Convergence of the fully discrete scheme]\label{thm:conv_scheme}
Assume Assumption~\ref{ass:num_model}, \eqref{eq:mob_assump_num}, and
\eqref{eq:init_conv_ass}. Let
\((\rho_h^n,q_h^n)_{n=0}^N\) be a family of discrete solutions of
\eqref{eq:scheme1_imex_num}--\eqref{eq:scheme2_imex_num}, and define
\(\mu_h^n\), \(\rho_{h,\tau}\), \(\rho_{h,\tau}^-\), \(\widehat\rho_{h,\tau}\),
\(\mu_{h,\tau}\), and \(D_\tau\) by
\eqref{eq:mu_full_discrete}--\eqref{eq:reconstruction_affine}. Then, along a subsequence
as \(h,\tau\to0\), there exist
\[
\rho\in L^\infty(0,T;H^1(\Omega))\cap H^1(0,T;H^{-1}(\Omega)),
\qquad
\mu\in L^2(0,T;H^1(\Omega)),
\]
such that \(\rho\) has the compactness properties stated in
Lemma~\ref{lem:compactness_conv},
\[
\mu_{h,\tau}\rightharpoonup \mu
\qquad\text{weakly in }L^2(0,T;H^1(\Omega)),
\]
and the pair \((\rho,\mu)\) is a weak solution of the regularized problem
in the sense of Definition~\ref{def:weak_reg}. Moreover,
\[
\rho(0)=\rho_0 \quad\text{in }L^2(\Omega),
\qquad
\int_\Omega \rho(t)\dx=\int_\Omega \rho_0\dx
\quad\text{for all }t\in[0,T].
\]
\end{theorem}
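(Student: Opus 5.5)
The plan is to combine the uniform bounds of Lemma~\ref{lem:uniform_bounds_conv} with the compactness of Lemma~\ref{lem:compactness_conv}, and then pass to the limit in a time-integrated form of the scheme. By \eqref{eq:uniform_muqtau_H1} and weak compactness in the reflexive space \(L^2(0,T;H^1(\Omega))\), after a further subsequence we have \(\mu_{h,\tau}\rightharpoonup\mu\) in \(L^2(0,T;H^1(\Omega))\), and hence \(\nabla q_{h,\tau}=\nabla\mu_{h,\tau}\rightharpoonup\nabla\mu\) in \(L^2(\Omega_T)\). Since \(D\in C([0,T];L^\infty)\), we have \(D_\tau\to D(\nu)\) uniformly in time in \(L^\infty\).

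For the passage to the limit I would fix a smooth test function \(\varphi\in C^1([0,T];H^1(\Omega))\), which is dense in \(L^2(0,T;H^1(\Omega))\), and use \(P_h\varphi(t)\) in \eqref{eq:scheme1_imex_num}. Integrating in time and using \eqref{eq:reconstruction_dt} gives
\[
\int_0^T(\partial_t\widehat\rho_{h,\tau},P_h\varphi)\dt+\int_0^T\!\!\int_\Omega m(\rho^-_{h,\tau})\bigl(\nabla\mu_{h,\tau}+\kappa D_\tau\nabla\rho_{h,\tau}\bigr)\cdot\nabla P_h\varphi\dx\dt=0 .
\]
The first term equals \(\int_0^T\langle\partial_t\widehat\rho_{h,\tau},\varphi\rangle\dt\) by orthogonality of \(P_h\), and it converges by \eqref{eq:dt_rhohat_weak_conv}. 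In the flux terms we write \(m(\rho^-_{h,\tau})\nabla P_h\varphi\). By \eqref{eq:rho_pc_strong_L2} and a.e.\ convergence on a subsequence, \(m(\rho^-_{h,\tau})\to m(\rho)\) a.e. The function \(m\) is bounded, and \(\nabla P_h\varphi\to\nabla\varphi\) strongly in \(L^2(\Omega_T)\) by \eqref{eq:projection_convergence} together with dominated convergence in time, using \eqref{eq:projection_properties}. Hence \(m(\rho^-_{h,\tau})D_\tau\nabla P_h\varphi\to m(\rho)D\nabla\varphi\) strongly in \(L^2(\Omega_T)\) (dominated convergence plus Vitali). Pairing this strong limit with the weak limits of \(\nabla\mu_{h,\tau}\) and \(\nabla\rho_{h,\tau}\) gives \eqref{eq:weak_reg_mass}. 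Density in \(\varphi\) then extends it to all of \(L^2(0,T;H^1)\); this uses the bounds on \(\partial_t\rho\), \(\nabla\mu\) and \(\nabla\rho\).

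For \eqref{eq:weak_reg_mu} I would use \eqref{eq:scheme2_mu_full} with \(\psi_h=P_h\psi\) and integrate in time. The terms \((\mu_{h,\tau},P_h\psi)\) and \(\varepsilon(\nabla\rho_{h,\tau},\nabla P_h\psi)\) pass to the limit as weak--strong products. I expect the main obstacle to be the nonlinear term \(\varepsilon^{-1}(W_1'(\rho_{h,\tau})+W_2'(\rho^-_{h,\tau}),P_h\psi)\). The term \(W_2'(\rho^-_{h,\tau})=-c_1\rho^-_{h,\tau}\) is linear and converges by \eqref{eq:rho_pc_strong_L2}. For \(W_1'\), the growth bound gives
\[
|W_1'(a)-W_1'(b)|\le C\bigl(1+|a|^{p-2}+|b|^{p-2}\bigr)|a-b| .
\]
Combining this with Hölder and \eqref{eq:rho_pc_strong_L2Lp} shows \(W_1'(\rho_{h,\tau})\to W_1'(\rho)\) in \(L^1(0,T;L^2(\Omega))\), and even in \(L^2(\Omega_T)\), using the \(L^\infty(0,T;L^{2(p-1)})\) bound that comes from \(H^1\subset L^{2(p-1)}\). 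This is exactly the place where \(p<4\) in \(d=3\) is needed. Paired with \(P_h\psi\to\psi\) strongly in \(L^2(\Omega_T)\), the term converges to \(\varepsilon^{-1}(W'(\rho)+c_1\rho-c_1\rho,\psi)=\varepsilon^{-1}(W'(\rho),\psi)\). The same growth bound together with \(\rho\in L^\infty(H^1)\) gives \(W'(\rho)\in L^2(\Omega_T)\).

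Finally, the initial condition follows from \(\widehat\rho_{h,\tau}(0)=\rho_h^0\to\rho_0\) and the uniform convergence \eqref{eq:rhohat_strong_C_L2}. Mass conservation follows from \((\widehat\rho_{h,\tau}(t),1)=(\rho_h^0,1)\) by Theorem~\ref{thm:disc_structure_num}. Passing to the limit with \eqref{eq:rhohat_strong_C_L2} gives \(\int_\Omega\rho(t)\dx=\int_\Omega\rho_0\dx\) for every \(t\).
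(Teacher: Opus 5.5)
Your proposal is correct and follows essentially the same route as the paper. You extract a weak limit of \(\mu_{h,\tau}\), use strong convergence of \(m(\rho^-_{h,\tau})\), of \(D_\tau\), and of \(W_1'(\rho_{h,\tau})\) in \(L^2(\Omega_T)\) (the last via the growth bound and \(L^2(0,T;L^{2(p-1)}(\Omega))\) compactness), and then pass to the limit in the time-integrated scheme tested with \(P_h\varphi\) and \(P_h\psi\). Two small differences: your a.e.\ convergence and dominated convergence argument for \(m(\rho^-_{h,\tau})D_\tau\nabla P_h\varphi\) makes explicit a step the paper only states briefly, and your preliminary restriction to \(\varphi\in C^1([0,T];H^1(\Omega))\) followed by density is harmless but unnecessary, since the time-integrated scheme already holds for every \(\varphi\in L^2(0,T;H^1(\Omega))\).
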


\begin{proof}
By \eqref{eq:uniform_muqtau_H1}, there exists \(\mu\in L^2(0,T;H^1(\Omega))\) such that,
after extraction,
\[
\mu_{h,\tau}\rightharpoonup \mu
\qquad\text{weakly in }L^2(0,T;H^1(\Omega)).
\]
Since \(m\) is globally Lipschitz and bounded, \eqref{eq:rho_pc_strong_L2} implies
\begin{equation}\label{eq:m_strong_conv}
m(\rho_{h,\tau}^-)\to m(\rho)
\qquad\text{strongly in }L^2(\Omega_T).
\end{equation}
Furthermore, by Assumption~\ref{ass:num_model}(2) and the definition of \(D_\tau\),
\begin{equation}\label{eq:Dtau_strong_conv}
D_\tau\to D(\nu)
\qquad\text{strongly in }L^\infty(\Omega_T).
\end{equation}

We next identify the nonlinear term in the chemical potential relation.
Since \(W_1''=W''+c_1\), Assumption~\ref{ass:num_model}(3) implies
\[
|W_1''(s)|\le C(1+|s|^{p-2})
\qquad\forall s\in\mathbb R.
\]
Hence, by the mean-value theorem,
\[
|W_1'(a)-W_1'(b)|
\le C\bigl(1+|a|^{p-2}+|b|^{p-2}\bigr)|a-b|
\qquad\forall a,b\in\mathbb R.
\]
Using Hölder's inequality, the strong convergence \eqref{eq:rho_pc_strong_L2Lp}, and the uniform bound of
\(\rho_{h,\tau}\) in \(L^\infty(0,T;L^{2(p-1)}(\Omega))\), we obtain
\[
W_1'(\rho_{h,\tau})\to W_1'(\rho)
\qquad\text{strongly in }L^2(\Omega_T).
\]
Since \(W_2'\) is linear,
\[
W_2'(\rho_{h,\tau}^-)\to W_2'(\rho)
\qquad\text{strongly in }L^2(\Omega_T).
\]
Hence
\begin{equation}\label{eq:Wstrong_conv_num}
W_1'(\rho_{h,\tau})+W_2'(\rho_{h,\tau}^-)\to W'(\rho)
\qquad\text{strongly in }L^2(\Omega_T).
\end{equation}

Let \(\varphi\in L^2(0,T;H^1(\Omega))\), and set
\(\varphi_h:=P_h\varphi\). By \eqref{eq:projection_convergence},
\[
\varphi_h\to\varphi
\qquad\text{strongly in }L^2(0,T;H^1(\Omega)).
\]
Using the piecewise affine reconstruction of the time derivative, the discrete scheme gives
\[
\int_0^T \langle \partial_t\widehat\rho_{h,\tau},\varphi_h\rangle\,dt
+\int_0^T\!\!\int_\Omega m(\rho_{h,\tau}^-)\nabla\mu_{h,\tau}\cdot\nabla\varphi_h \dx\dt
+\kappa\int_0^T\!\!\int_\Omega m(\rho_{h,\tau}^-)D_\tau\nabla\rho_{h,\tau}\cdot\nabla\varphi_h \dx\dt
=0.
\]
The first term converges to
\[
\int_0^T \langle \partial_t\rho,\varphi\rangle\,dt
\]
by \eqref{eq:dt_rhohat_weak_conv} and \eqref{eq:projection_convergence}. The second term converges to
\[
\int_0^T\!\!\int_\Omega m(\rho)\nabla\mu\cdot\nabla\varphi \dx\dt
\]
by \eqref{eq:m_strong_conv}, the weak convergence of \(\nabla\mu_{h,\tau}\), and
\(\nabla\varphi_h\to \nabla\varphi\) strongly in \(L^2(\Omega_T)\). The traction term converges to
\[
\kappa\int_0^T\!\!\int_\Omega m(\rho)D(\nu)\nabla\rho\cdot\nabla\varphi \dx\dt
\]
by \eqref{eq:m_strong_conv}, \eqref{eq:Dtau_strong_conv}, the weak convergence
\(\nabla\rho_{h,\tau}\rightharpoonup \nabla\rho\) in \(L^2(\Omega_T)\), and
\(\nabla\varphi_h\to\nabla\varphi\) strongly in \(L^2(\Omega_T)\).
This proves \eqref{eq:weak_reg_mass}.

To obtain the weak chemical potential identity, let \(\psi\in L^2(0,T;H^1(\Omega))\), and set
\(\psi_h:=P_h\psi\). Again,
\[
\psi_h\to\psi
\qquad\text{strongly in }L^2(0,T;H^1(\Omega)).
\] By \eqref{eq:scheme2_mu_full},
\[
\int_0^T\!\!\int_\Omega \mu_{h,\tau}\psi_h \dx\dt
=
\varepsilon^{-1}\int_0^T\!\!\int_\Omega
\bigl(W_1'(\rho_{h,\tau})+W_2'(\rho_{h,\tau}^-)\bigr)\psi_h \dx\dt
+\varepsilon\int_0^T\!\!\int_\Omega \nabla\rho_{h,\tau}\cdot\nabla\psi_h \dx\dt.
\]
Passing to the limit by the weak convergence of \(\mu_{h,\tau}\), the strong convergence
\eqref{eq:Wstrong_conv_num}, the weak convergence of \(\nabla\rho_{h,\tau}\), and
\eqref{eq:projection_convergence}, we obtain \eqref{eq:weak_reg_mu}.

Since \(\widehat\rho_{h,\tau}(0)=\rho_h^0\) and \(\widehat\rho_{h,\tau}\to \rho\) strongly in
\(C([0,T];L^2(\Omega))\), while \(\rho_h^0\to \rho_0\) strongly in \(L^2(\Omega)\), it follows that
\[
\rho(0)=\rho_0 \qquad\text{in }L^2(\Omega).
\]
Finally, mass conservation holds at the discrete level:
\[
\int_\Omega \rho_h^n \dx = \int_\Omega \rho_h^0 \dx
\qquad\text{for all }n.
\]
Passing to the limit by \eqref{eq:rhohat_strong_C_L2} and \eqref{eq:init_conv_ass}, we obtain
\[
\int_\Omega \rho(t)\dx=\int_\Omega \rho_0\dx
\qquad\text{for all }t\in[0,T].
\]
This completes the proof.
\end{proof}

\begin{remark}[Scope of the convergence result]
Theorem~\ref{thm:conv_scheme} gives convergence of the fully discrete finite element
scheme to a weak solution of the \emph{regularized} anisotropic Cahn--Hilliard problem.
Passing simultaneously to the degenerate mobility limit, for instance by letting
\(m_1\downarrow0\) and \(m_2\uparrow\infty\), is substantially more delicate and is not
addressed here.
\end{remark}

% =====================================================================
%  Section 5 (or 4): Numerical experiments  --  CONSOLIDATED, DEDUPLICATED
%
%  Flow:
%    intro
%    5.1 Implementation, parameters, and diagnostics      (setup)
%    5.2 Traction-induced patterning and energy diagnostics
%        - snapshots (Fig:evolution, images)
%        - mass + energy (Fig:mass-energy-n128, pgfplots)
%        - anisotropy + dissipation (Fig:anisotropy-dissipation, pgfplots)
%        - energy residual (Fig:energy-residual, pgfplots)
%    5.3 Onset and multiplicity of protrusions            (fingering, pgfplots)
%    5.4 Manufactured-solution convergence study          (pgfplots)
%
%  Data file layout assumed:
%    Data/   : snapshot PNGs, viridis.png,
%              convergence_mms_eta05.txt, fingering_Nt.txt, fingering_peak.txt
%    Data2/  : n128_fin{eta}.txt  (from run_diagnostics_full.py),
%              columns 0:t 1:mass 2:energy 3:dissipation 4:anisotropy
%                      5:cross 6:grad_diff_sq 7:R_residual 8:mass_drift
%  Adjust the Data/ vs Data2/ prefixes to your local layout if needed.
% =====================================================================

\section{Numerical experiments}\label{sec:sims}

We illustrate traction-driven pattern formation and protrusive growth within
the regularized framework analyzed in Section~\ref{sec:numerics}, using the
scheme \eqref{eq:scheme1_imex_num}--\eqref{eq:scheme2_imex_num} implemented in
\texttt{Firedrake} \cite{rathgeber2016firedrake}. All computations in this
section are two-dimensional; in particular, the quartic double-well potential
used below is covered by Assumption~\ref{ass:num_model}(3).

The purpose of the experiments is threefold. First, we test the conservative
and energy-structured character of the discretization by monitoring mass,
energy, dissipation, and the one-step energy-balance residual associated with
Theorem~\ref{thm:disc_structure_num}. Second, we demonstrate the qualitative
transition from classical Cahn--Hilliard coarsening to traction-induced
fingering, and we quantify the onset and multiplicity of the resulting
protrusions. Third, we report a manufactured-solution convergence study for the
regularized scheme. The latter is an empirical consistency test of the
implementation; the convergence result proved in Section~\ref{subsec:conv} is
compactness-based and does not assert a quantitative rate.

\subsection{Implementation, parameters, and diagnostics}
\label{subsec:numerical_setup}

At each time step we solve
\eqref{eq:scheme1_imex_num}--\eqref{eq:scheme2_imex_num} by a damped Newton
method with line search, while the linearized systems are solved by Krylov
methods with a direct \(LU\)-preconditioner. Unless stated otherwise we work
in \(\Omega=(0,1)^2\) with a uniform triangulation of mesh size \(h=1/256\),
use \(V_h=\mathbb P_1\) elements, and set the interfacial width to
\(\varepsilon=10^{-2}\), so that the diffuse interface is resolved by
approximately five mesh cells across. We use the double-well potential
\[
W(\rho)=\tfrac14\rho^2(1-\rho)^2,
\]
the time step \(\tau=10^{-5}\), and the smooth, globally bounded mobility
\begin{equation}\label{eq:sim_mobility}
m_\ast(r):=m_-+\tfrac12\bigl(r+\sqrt{r^2+\delta^2}\bigr)
       -\tfrac12\bigl((r-M)+\sqrt{(r-M)^2+\delta^2}\bigr),
\end{equation}
with $m_-=10^{-3}$, $M=2$, $\delta=10^{-3}$,
evaluated explicitly at the previous time step,
\(m^{n-1}=m_\ast(\rho_h^{n-1})\). This is a smooth clipping of the density
mobility \(m(\rho)=\rho\) to the interval \([0,M]\) with a small positive
floor: on the physically relevant range \(\rho\in[0,1]\) one has
\(m_\ast(\rho)\approx m_-+\rho\), while \(m_\ast(r)\to m_-\) as
\(r\to-\infty\) and \(m_\ast(r)\to m_-+M\) as \(r\to+\infty\). The bounds
\(m_-\le m_\ast\le m_-+M\) hold globally on \(\mathbb R\), so that
assumption~\eqref{eq:mob_assump_num} applies directly to the implemented
mobility.

We also use the convex--concave splitting of the bulk potential introduced in
Section~\ref{sec:numerics}: with \(W=W_1+W_2\) and
\(W_1'(\rho_h^n)+W_2'(\rho_h^{n-1})=W'(\rho_h^n)+c_1(\rho_h^n-\rho_h^{n-1})\),
the chemical-potential equation \eqref{eq:scheme2_imex_num} is evaluated with
\(c_1=1\). For the quartic potential one has
\(W''(s)=3s^2-3s+\tfrac12\), whose minimum is
\(\min_s W''(s)=W''(\tfrac12)=-\tfrac14\); hence the semiconvexity bound
\(W''\ge -c_1\) holds for any \(c_1\ge\tfrac14\), and the choice \(c_1=1\)
is admissible, giving \(W_1''=W''+1\ge\tfrac34>0\). This is the scheme to
which the structure-preserving analysis of Theorem~\ref{thm:disc_structure_num}
applies.

To generate outward-biased protrusions we use the radial direction field
\(\nu\) and set \(D=\nu\otimes\nu\), where
\begin{equation}\label{Eq:Radial}
\nu(x):=\frac{x-x_0}{\sqrt{|x-x_0|^2+\sigma^2}},
\qquad
x_0=\biggl(\frac12,\frac12\biggr),
\qquad
\sigma=10^{-2}.
\end{equation}
This models an outward traction bias aligned with a coarse-grained cue pointing
from the aggregate interior toward the periphery, a natural idealization for
approximately radially symmetric organoid geometries. We initialize
\(\rho_h^0\) as a smooth disk with diffuse interface,
\[
\rho_h^0(x)=\tfrac12\left(1-\tanh\left(
\dfrac{|x-(\frac12,\frac12)|-0.28}{3\varepsilon}\right)\right).
\]

Throughout, we monitor the mass and energy,
\[
M_h(t^n):=\int_\Omega\rho_h^n\dx,
\qquad
E_h^n:=E_h(\rho_h^n),
\]
the mobility-weighted dissipation proxy and the anisotropy measure
\begin{equation}\label{eq:diagnostics_D_A}
\mathcal D_h^n:=\int_\Omega m^{n-1}|\nabla q_h^n|^2\dx,
\qquad
\mathcal A_h^n:=\int_\Omega m^{n-1}\nabla\rho_h^n\cdot D^n\nabla\rho_h^n\dx ,
\end{equation}
and the one-step energy-balance residual
\begin{equation}\label{eq:energy_residual_num}
\begin{aligned}
\mathcal R_h^n:={}&
E_h(\rho_h^n)-E_h(\rho_h^{n-1})
+\frac{\varepsilon}{2}\|\nabla(\rho_h^n-\rho_h^{n-1})\|_{L^2(\Omega)}^2
+\tau\int_\Omega m^{n-1}|\nabla q_h^n|^2\dx
\\
&\quad
+\tau\kappa\int_\Omega m^{n-1}D^n\nabla\rho_h^n\cdot\nabla q_h^n\dx .
\end{aligned}
\end{equation}
Here \(\mathcal D_h^n\) is the dissipation term in the Cahn--Hilliard part of
the energy estimate and \(\mathcal A_h^n\) measures the alignment of the
current density gradient with the prescribed traction tensor. The proof of
Theorem~\ref{thm:disc_structure_num} gives \(\mathcal R_h^n\le0\) for an exact
solution of the nonlinear algebraic system, before the subsequent use of
Young's inequality; thus \eqref{eq:energy_residual_num} is a sharper diagnostic
than the energy alone, retaining the traction cross-term.

We emphasize that the time-step restriction in the existence proof of
Theorem~\ref{thm:exist_step_num} is a sufficient coercivity condition and is
not expected to be sharp. The simulations below were run in a regime in which
the Newton solver converged robustly and the monitored structural quantities
behaved consistently with the discrete estimates.

\subsection{Traction-induced patterning and energy diagnostics}
\label{subsec:patterning_results}

We vary the traction strength \(\eta\) for fixed interfacial width
\(\varepsilon\). For \(\eta=0\) the model reduces to the usual Cahn--Hilliard
dynamics, with phase separation and coarsening; for \(\eta>0\) the anisotropic
traction flux biases transport and produces elongated structures aligned with
\(\nu\). Figure~\ref{fig:evolution} shows the evolution of the phase field for
\(\eta\in\{0,0.25,0.5,0.75,1\}\). For \(\eta=0\) the interface stays
essentially isotropic and the evolution is dominated by curvature-driven
relaxation. For \(\eta=0.25\) the traction is weak and the result is close to
the \(\eta=0\) evolution. As \(\eta\) increases, the radial bias destabilizes
the interface and produces elongated protrusions: intermediate \(\eta\) yields
a small number of persistent protrusions, while larger \(\eta\) leads to a
fingering/branching regime in which thin arms form and propagate outward.

\begin{figure}[htp!]
\centering
\includegraphics[page=1,width=.99\textwidth]{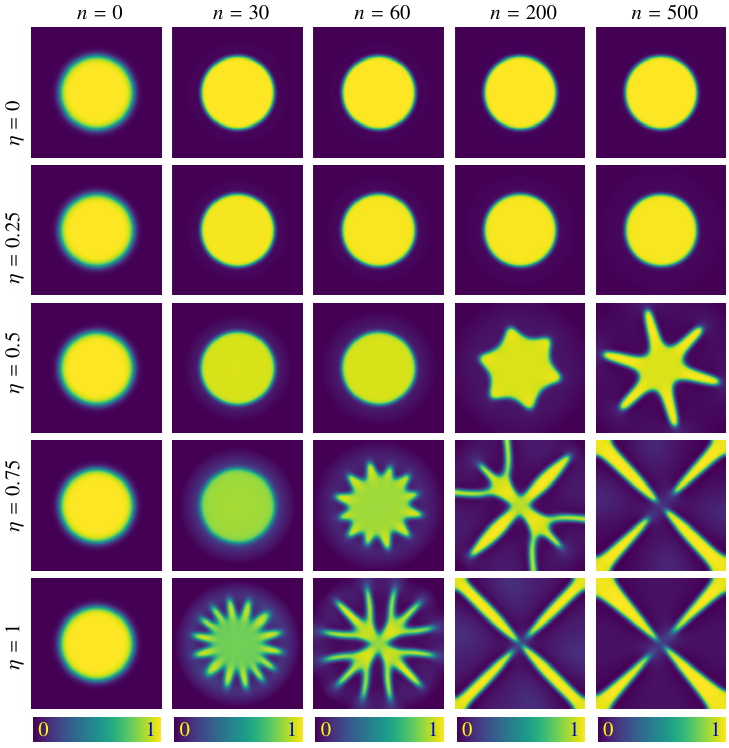}
\caption{Time snapshots of the phase field \(\rho_h^n\) under radial traction
bias. Columns show successive time steps (increasing left to right); rows
correspond to increasing traction strength \(\eta\) (top to bottom). For
\(\eta=0\) the interface remains essentially isotropic and evolves by classical
Cahn--Hilliard coarsening, whereas increasing \(\eta\) triggers a
traction-induced interfacial instability and the formation of finger-like
protrusions. The colour scale is fixed to \(\rho\in[0,1]\).}
\label{fig:evolution}
\end{figure}

Figure~\ref{fig:mass-energy-n128} reports the discrete energy and the mass
drift. The mass is conserved up to machine precision throughout, confirming
that the conservative formulation \eqref{eq:scheme1_imex_num} introduces no
artificial gain or loss. The energy behaves differently across regimes: for \(\eta=0\) it decreases monotonically, as expected from the
mobility-weighted Cahn--Hilliard gradient-flow structure associated with
\(E_h\). For larger \(\eta\), the energy can increase, since the traction flux
is not generated by the variational derivative of \(E_h\) and can feed energy
into the interface as protrusions form.

\begin{figure}[htp!]
\centering
\includegraphics[page=2,width=.47\textwidth]{figures.pdf}\hfill\includegraphics[page=3,width=.45\textwidth]{figures.pdf}
\caption{Left: discrete free energy \(E_h(\rho_h^n)\) versus time
\(t^n=n\tau\) for \(\eta\in\{0,0.25,0.5,0.75,1\}\). Right: mass drift
\(M_h(t^n)-M_h(0)\) for \(\eta\in\{0,0.5,1\}\), confirming conservation up to
machine precision.}
\label{fig:mass-energy-n128}
\end{figure}

To resolve this transient we display, in Figure~\ref{fig:anisotropy-dissipation},
the traction anisotropy measure \(\mathcal A_h^n\) and the mobility-weighted
dissipation proxy \(\mathcal D_h^n\). Both decay overall, with transient
increases during the protrusion-formation phase. This is consistent with the
discrete energy inequality \eqref{eq:disc_energy_ineq_num}: the traction
contribution acts as a controlled perturbation of the gradient-flow dissipation
mechanism.

\begin{figure}[htp!]
\centering
\includegraphics[page=4,width=.47\textwidth]{figures.pdf}\hfill\includegraphics[page=5,width=.45\textwidth]{figures.pdf}
\caption{Left: anisotropy measure
\(\mathcal A_h^n=\int_\Omega m^{n-1}\nabla\rho_h^n\cdot D^n\nabla\rho_h^n\dx\).
Right: mobility-weighted dissipation proxy
\(\mathcal D_h^n=\int_\Omega m^{n-1}|\nabla q_h^n|^2\dx\). Traction acts as a controlled perturbation of the Cahn--Hilliard
energy-dissipation mechanism.}
\label{fig:anisotropy-dissipation}
\end{figure}

Finally, Figure~\ref{fig:energy-residual} displays the one-step
energy-balance residual \eqref{eq:energy_residual_num}. In the reported runs
the residual remains nonpositive up to solver and roundoff tolerances; for the
two extreme cases we measure
\(\max_n\mathcal R_h^n\approx-5.6\times10^{-9}\) for \(\eta=0\) and
\(\max_n\mathcal R_h^n\approx-8.6\times10^{-6}\) for \(\eta=0.5\). This verifies
at the algebraic level the exact balance used in the proof of
Theorem~\ref{thm:disc_structure_num}, and it explains the contrasting energy
behaviour of Figure~\ref{fig:mass-energy-n128}: the traction cross-term in
\eqref{eq:energy_residual_num} is negative and can outweigh the dissipation, so
\(E_h^n\) may increase while the full balance remains dissipative. The residual
therefore distinguishes a controlled traction-induced energy transient from a
numerical loss of stability.

\begin{figure}[htp!]
\centering
\includegraphics[page=6,width=.55\textwidth]{figures.pdf}
\caption{One-step energy-balance residual \eqref{eq:energy_residual_num},
plotted as \(-\mathcal R_h^n\ge0\) on a logarithmic axis. Since
\(\mathcal R_h^n\le0\) holds throughout (Theorem~\ref{thm:disc_structure_num}),
the curves confirm the discrete inequality up to the nonlinear solver and
roundoff tolerances, for all reported traction strengths.}
\label{fig:energy-residual}
\end{figure}

\subsection{Onset and multiplicity of protrusions}
\label{subsec:fingering}

The snapshots of Figure~\ref{fig:evolution} indicate that both the onset and
the number of protrusions depend systematically on the traction strength
\(\eta\). We now quantify this dependence and complement it with a heuristic
instability argument for the radial interface.

\paragraph{Heuristic mechanism.}
Consider a nearly circular interface of radius \(R_0\) centred at \(x_0\), and
perturb it by an azimuthal mode of wavenumber \(k\). The Cahn--Hilliard part
relaxes interfacial curvature and therefore penalizes short-wavelength
perturbations; this is the mechanism responsible for coarsening when
\(\eta=0\). By contrast, for \(D=\nu\otimes\nu\) and a radial interface, \(\nu\)
is approximately aligned with the normal direction, so the traction
contribution to the conservative flux,
\(-\kappa\,m(\rho)(\nu\cdot\nabla\rho)\,\nu\) with \(\kappa=\varepsilon^{-1}\eta^2\),
points outward (since \(\rho\) decreases outward) and amplifies outward bulges
once the traction is large enough. A simple balance of scales makes the trend
explicit: along an interface of radius \(R\), an angular perturbation of
wavenumber \(k\) has tangential length scale \(R/k\); the fourth-order
Cahn--Hilliard regularization acts at the scale \(\varepsilon k^4/R^4\) and the
second-order traction term at the scale \(\kappa k^2/R^2\), so balancing them
gives
\begin{equation}\label{eq:fingering_scaling_heuristic}
k_{\mathrm{bal}}\sim R\sqrt{\frac{\kappa}{\varepsilon}}=\frac{R\eta}{\varepsilon},
\qquad
\lambda_{\mathrm{bal}}\sim\frac{2\pi R}{k_{\mathrm{bal}}}\sim\frac{\varepsilon}{\eta}.
\end{equation}
This argument is only heuristic; the precise growth rates depend on the
mobility, the interface profile, the radial geometry, the boundary, and
nonlinear saturation. We therefore use \eqref{eq:fingering_scaling_heuristic}
only as a qualitative guide: a traction threshold below which the radial
interface is stable, and, above it, a dominant wavenumber that increases and a
wavelength that decreases with \(\eta\).

\paragraph{Extraction of observables.}
To extract the protrusion count we threshold the phase field at level
\(\rho=\tfrac12\) and extract the outer interface as a polar graph
\(r=r_h(\theta)\) by radial bisection from \(x_0\); if several crossings occur
on a ray, the outermost is used. The count \(N_h\) is the number of azimuthal
local maxima of \(r_h\) whose prominence exceeds
\(\max\{0.05(r_{\max}-r_{\min}),\,4h\}\), the absolute floor \(4h\) preventing
spurious detections on a nearly circular interface, and the characteristic
wavelength is \(\lambda_h:=2\pi\bar r_h/N_h\) with \(\bar r_h\) the angular mean
of \(r_h\). We sweep \(\eta\in\{0.40,0.45,\dots,1.00\}\) at
\(\varepsilon=10^{-2}\), \(h=1/256\), \(\tau=10^{-5}\), averaging over three
independent smoothed-noise perturbations of the initial interface of amplitude
\(\|\zeta\|_{L^\infty}\le10^{-3}\).

\paragraph{The protrusion count is a transient.}
The count is not a stationary quantity: protrusions appear at an onset time
that decreases with \(\eta\), and once formed they coarsen, as neighbouring
protrusions merge and thin filaments shed satellites. A single fixed
observation time therefore does not give a clean \(N_h(\eta)\): small-\(\eta\)
configurations are sampled before onset while large-\(\eta\) configurations are
already coarsening. To expose this we record the count at a sequence of
observation times within a single forward run for each \(\eta\). The resulting
family \(N_h(\eta,t)\) is shown in Figure~\ref{fig:fingering-scaling} (left). No
protrusions form for \(\eta\le0.40\) within the observation window, identifying
a threshold \(\eta_c\approx0.42\). Above it, the onset time decreases
monotonically with \(\eta\), from \(t_{\mathrm{onset}}\approx3.5\times10^{-3}\)
at \(\eta=0.45\) to \(5\times10^{-4}\) at \(\eta=1.0\)
(Table~\ref{tab:fingering}), while for each \(\eta\) the count rises to a peak
shortly after onset and then decreases through coarsening; for \(\eta=1.0\), for
instance, \(N_h\) falls from about \(12\) near \(t=5\times10^{-4}\) to \(5\) by
\(t=5\times10^{-3}\). Consequently the apparent \(N_h(\eta)\) at any fixed time
is non-monotone and misleading.

\paragraph{Near-onset count.}
The quantity that most closely reflects the linear most-unstable mode is the
near-onset peak count \(N_h^{\mathrm{peak}}(\eta):=\max_t N_h(\eta,t)\), shown
in Figure~\ref{fig:fingering-scaling} (right) and tabulated in
Table~\ref{tab:fingering}. It increases monotonically with \(\eta\): it equals
\(6\) over the whole band \(0.45\le\eta\le0.70\) just above threshold, then
rises to \(8\) at \(\eta=0.8\) and to roughly \(11\)--\(12\) at \(\eta=1.0\),
while the corresponding wavelength decreases from \(\lambda_h\approx0.29\) to
\(\lambda_h\approx0.14\). The plateau at \(6\) is genuine: each case in
\(0.45\le\eta\le0.70\) jumps directly from \(N_h=0\) to \(N_h=6\) at its onset,
so \(k_\ast=6\) is the mode selected throughout the near-threshold band, and
only for \(\eta\ge0.8\) does the selected mode shift upward. This is consistent
with the qualitative prediction of \eqref{eq:fingering_scaling_heuristic} that
the dominant wavenumber grows with \(\eta\); the observed decrease of
\(\lambda_h\) is in fact somewhat steeper than the simple linear-in-\(\eta\)
estimate, which we attribute to the near-threshold mode selection and to
nonlinear effects not captured by the scale balance.

\begin{table}[htp!]
\centering
\caption{Onset and near-onset multiplicity of protrusions versus the traction
strength \(\eta\), at \(\varepsilon=10^{-2}\), \(h=1/256\), \(\tau=10^{-5}\),
averaged over three random initial perturbations. \(t_{\mathrm{onset}}\) is the
first observation time at which protrusions are detected;
\(N_h^{\mathrm{peak}}=\max_t N_h(\eta,t)\) is the near-onset peak count and
\(\lambda_h\) the corresponding wavelength. A dash indicates that no protrusions
form within the observation window \(t\in[2.5\times10^{-4},5\times10^{-3}]\).}
\label{tab:fingering}
\begin{tabular}{c|c|c|c}
\hline
\(\eta\) & \(t_{\mathrm{onset}}\) & \(N_h^{\mathrm{peak}}\) & \(\lambda_h\) at peak\\
\hline
\(0.40\) & ---                & ---    & ---    \\
\(0.45\) & \(3.5\times10^{-3}\) & \(6.0\)  & \(0.286\)\\
\(0.50\) & \(2.5\times10^{-3}\) & \(6.0\)  & \(0.285\)\\
\(0.55\) & \(2.0\times10^{-3}\) & \(6.0\)  & \(0.284\)\\
\(0.60\) & \(1.5\times10^{-3}\) & \(6.0\)  & \(0.285\)\\
\(0.70\) & \(1.0\times10^{-3}\) & \(6.0\)  & \(0.285\)\\
\(0.80\) & \(7.5\times10^{-4}\) & \(8.0\)  & \(0.202\)\\
\(0.90\) & \(7.5\times10^{-4}\) & \(10.7\) & \(0.150\)\\
\(1.00\) & \(5.0\times10^{-4}\) & \(11.7\) & \(0.141\)\\
\hline
\end{tabular}
\end{table}

\begin{figure}[htp!]
\centering
\includegraphics[page=7,width=.45\textwidth]{figures.pdf}\hfill\includegraphics[page=8,width=.45\textwidth]{figures.pdf}
\caption{Onset and multiplicity of protrusions. Left: the family
\(N_h(\eta,t)\) at several observation times. As \(t\) increases the onset
threshold in \(\eta\) moves left, while for large \(\eta\) the count decreases
through nonlinear coarsening, so a single fixed time gives a misleading
\(N_h(\eta)\). Right: the near-onset peak count
\(N_h^{\mathrm{peak}}(\eta)=\max_t N_h(\eta,t)\), the cleanest discrete proxy
for the linear most-unstable mode, which increases monotonically with \(\eta\)
above the threshold \(\eta_c\approx0.42\).}
\label{fig:fingering-scaling}
\end{figure}

\subsection{Manufactured-solution convergence study}
\label{subsec:convergence_study}

Theorem~\ref{thm:conv_scheme} establishes convergence of the fully discrete
scheme to a weak solution of the regularized problem as \(h,\tau\to0\), but
does not provide a rate. We complement it with a quantitative
manufactured-solution study, for which an exact smooth reference is available.

\paragraph{Setup.}
On \(\Omega=(0,1)^2\) we prescribe
\[
\rho_\ast(x,t):=\tfrac12+\tfrac14\cos(\pi x_1)\cos(\pi x_2)e^{-t},
\qquad
\mu_\ast:=\varepsilon^{-1}W'(\rho_\ast)-\varepsilon\Delta\rho_\ast ,
\]
insert \((\rho_\ast,\mu_\ast)\) into
\eqref{eq:reg_model_rho}--\eqref{eq:reg_model_mu}, and add the residual
\[
f:=\partial_t\rho_\ast-\Div\bigl(m(\rho_\ast)\nabla\mu_\ast\bigr)
-\kappa\Div\bigl(m(\rho_\ast)D(\nu)\nabla\rho_\ast\bigr)
\]
to the right-hand side of the discrete mass balance
\eqref{eq:scheme1_imex_num}, leaving the chemical-potential equation unchanged.
By construction \(f\) is smooth, has zero spatial mean, and respects the no-flux
boundary conditions, so that mass is conserved exactly at the discrete level.
We take a constant anisotropy direction \(\nu=e_1\), which renders the natural
total-flux boundary condition of the scheme consistent with \(\rho_\ast\). We use \(\varepsilon=5\times10^{-2}\), integrate to \(T=10^{-3}\), set
\(\eta=0.5\), and refine in space and time simultaneously with \(h_i=2^{-i}\),
\(\tau_i=C_\tau h_i^2\); this parabolic scaling keeps the temporal error below
the dominant spatial error throughout.

\paragraph{Errors and experimental orders.}
We report
\[
e^\rho_{L^\infty L^2}:=\|\rho_\ast-\rho_{h,\tau}\|_{L^\infty(0,T;L^2)},
\quad
e^\rho_{L^2 H^1}:=\|\rho_\ast-\rho_{h,\tau}\|_{L^2(0,T;H^1)},
\quad
e^\mu_{L^2 L^2}:=\|\mu_\ast-\mu_{h,\tau}\|_{L^2(0,T;L^2)},
\]
and \(\mathrm{EOC}_i:=\log_2(e_{i-1}/e_i)\). For a smooth manufactured
solution, interpolation and implicit-Euler heuristics suggest second-order
behaviour in \(e^\rho_{L^\infty L^2}\) under the above scaling, and at least
first-order behaviour in \(e^\rho_{L^2 H^1}\) and \(e^\mu_{L^2 L^2}\); these are
expectations for the smooth benchmark, not consequences of
Theorem~\ref{thm:conv_scheme}. Table~\ref{tab:eoc-combined} reports the combined space--time refinement.
The coarse levels are mildly pre-asymptotic, while the finest levels show the
expected textbook rates for mixed \(P_1\) finite elements on smooth data:
second-order convergence for \(\rho\) in \(L^\infty(0,T;L^2(\Omega))\),
first-order convergence for \(\rho\) in \(L^2(0,T;H^1(\Omega))\), and
second-order convergence for \(\mu\) in \(L^2(0,T;L^2(\Omega))\). Thus the
observed asymptotic rates are \(2/1/2\), in agreement with the standard
interpolation heuristic, and no degradation due to the traction term is
observed in this smooth benchmark.

\begin{table}[htp!]
\centering
\caption{Experimental order of convergence for the manufactured solution under
combined space--time refinement \(\tau_i=C_\tau h_i^2\), at \(T=10^{-3}\),
\(\varepsilon=5\times10^{-2}\), \(\eta=0.5\).}
\label{tab:eoc-combined}
\begin{tabular}{c|cc|cc|cc}
\hline
\(i\) & \(e^\rho_{L^\infty L^2}\) & EOC & \(e^\rho_{L^2 H^1}\) & EOC
    & \(e^\mu_{L^2 L^2}\) & EOC\\
\hline
\(3\) & \(1.983\times10^{-3}\) & ---  & \(3.100\times10^{-3}\) & ---
    & \(9.496\times10^{-4}\) & --- \\
\(4\) & \(6.545\times10^{-4}\) & \(1.60\) & \(1.712\times10^{-3}\) & \(0.86\)
    & \(2.484\times10^{-4}\) & \(1.93\)\\
\(5\) & \(1.814\times10^{-4}\) & \(1.85\) & \(8.526\times10^{-4}\) & \(1.01\)
    & \(6.328\times10^{-5}\) & \(1.97\)\\
\(6\) & \(4.788\times10^{-5}\) & \(1.92\) & \(4.312\times10^{-4}\) & \(0.98\)
    & \(1.631\times10^{-5}\) & \(1.96\)\\
\(7\) & \(1.209\times10^{-5}\) & \(1.99\) & \(2.155\times10^{-4}\) & \(1.00\)
    & \(4.098\times10^{-6}\) & \(1.99\)\\
\hline
\end{tabular}
\end{table}

Because the parabolic scaling couples \(h\) and \(\tau\), we additionally
isolate the two contributions. With \(\tau\) fixed at its smallest value,
\(\tau=7.62939\times10^{-7}\), and \(h\) varying, the
\(L^\infty L^2\)-error of \(\rho\) has EOCs
\(1.95\), \(1.98\), \(1.99\), and \(2.00\), confirming clean second-order
spatial convergence:
\[
e^\rho_{L^\infty L^2}
=
2.928\times10^{-3},\ 7.577\times10^{-4},\
1.924\times10^{-4},\ 4.827\times10^{-5},\
1.208\times10^{-5}.
\]
With \(h=1/128\) fixed and \(\tau\) varying over a decade, the error decreases
and then plateaus,
\[
e^\rho_{L^\infty L^2}
=
1.955\times10^{-5},\ 1.382\times10^{-5},\
1.362\times10^{-5},\ 1.247\times10^{-5},\
1.215\times10^{-5}
\quad(\tau=8,4,2,1,0.5\times10^{-4}),
\]
showing that the temporal error has already fallen below the fixed spatial
error. Together these one-sided studies confirm that the reported combined
rates are effectively spatial.

\begin{figure}[htp!]
\centering
\includegraphics[page=9,width=.75\textwidth]{figures.pdf}
\caption{Experimental convergence for the manufactured solution under the
parabolic refinement \(\tau_i=C_\tau h_i^2\), at \(T=10^{-3}\),
\(\varepsilon=5\times10^{-2}\), \(\eta=0.5\). The observed asymptotic rates
are second order for \(\rho\) in \(L^\infty L^2\), first order for \(\rho\)
in \(L^2H^1\), and second order for \(\mu\) in \(L^2L^2\), matching the
standard mixed-\(P_1\) interpolation heuristic; see
Table~\ref{tab:eoc-combined}.}
\label{fig:convergence}
\end{figure}

\paragraph{Interpretation.}
This study should be read as a numerical benchmark for the regularized, forced
problem and for the implementation of
\eqref{eq:scheme1_imex_num}--\eqref{eq:scheme2_imex_num}. It is consistent with
the compactness convergence theorem and with the textbook behaviour of a mixed
\(P_1\) convex-splitting scheme on smooth data: \(2/1/2\) convergence in the
three reported quantities. It does not constitute a proof of an error estimate,
and the rates reported here should not be confused with the rate-free
convergence statement of Theorem~\ref{thm:conv_scheme}.

\section{Conclusion and outlook}\label{sec:conclusion}

We have introduced a structure-preserving finite element discretization for a
Cahn--Hilliard equation with anisotropic traction flux, motivated by
mechanically biased cell interactions in digit-forming organoids. The additional traction flux is conservative but not generated by the
variational derivative of the Cahn--Hilliard energy, and therefore acts as a
non-variational perturbation of the mobility-weighted Cahn--Hilliard
gradient-flow dynamics.

For a regularized strictly positive mobility, we proposed a mixed
\(P_1\)--\(P_1\) scheme with a convex--concave splitting of the bulk energy and
an explicit treatment of the mobility. The method preserves the total mass
exactly and satisfies a discrete energy balance in which the traction term is
controlled as a perturbation of the Cahn--Hilliard dissipation. We proved
existence of fully discrete solutions and convergence, as \(h,\tau\to0\), to a
weak solution of the regularized problem.

The numerical experiments confirm the main structural and qualitative
properties of the method. Mass is conserved up to solver precision, and the
energy-balance residual verifies the discrete estimate at the algebraic level.
For small traction strength, the dynamics resemble classical Cahn--Hilliard
coarsening. For larger traction, the radial anisotropic flux destabilizes the
diffuse interface and produces persistent finger-like protrusions aligned with
the prescribed direction field. The fingering diagnostics identify a traction
threshold, a decreasing onset time, and an increasing near-onset protrusion
count as \(\eta\) grows. The manufactured-solution convergence study further
shows the expected textbook mixed-\(P_1\) rates.

Several questions remain open. First, the present analysis treats a
regularized mobility; passing to the genuinely degenerate mobility of the
original model at the fully discrete level would require estimates uniform in
the mobility regularization. Second, although the manufactured-solution study exhibits the expected
textbook rates in a smooth benchmark, the convergence theorem itself is
qualitative. Deriving rigorous optimal error estimates for smooth solutions,
including the explicit mobility and traction perturbation, remains an
interesting open problem. Third, a
sharp-interface or matched-asymptotic analysis could clarify the observed
traction-induced instability thresholds and wavelength selection. Finally, it
would be natural to couple the prescribed traction tensor to evolving
mechanical, orientational, or biochemical fields. The conservative finite
element framework developed here provides a basis for such extensions. \vspace{-.2cm}

\setlength{\bibsep}{0.2\baselineskip}
\bibliographystyle{model1b-num-names}
\bibliography{literature}

@article{Agosti2018,
	title        = {Error analysis of a finite element approximation of a degenerate {C}ahn--{H}illiard equation},
	author       = {Agosti, Abramo},
	year         = {2018},
	journal      = {ESAIM: Mathematical Modelling and Numerical Analysis},
	volume       = {52},
	number       = {3},
	pages        = {827--867}
}

@book{roubicek2005nonlinear,
  title={Nonlinear Partial Differential Equations with Applications},
  author={Roub{\'\i}{\v{c}}ek, Tom{\'a}{\v{s}}},
  year={2005},
  publisher={Springer}
}

@article{barrett_blowey_1999,
  title={Finite element approximation of the {C}ahn--{H}illiard equation with concentration dependent mobility},
  author={Barrett, John and Blowey, James},
  journal={Mathematics of Computation},
  volume={68},
  number={226},
  pages={487--517},
  year={1999}
}

@article{barrett1999finite,
	title        = {Finite element approximation of the {C}ahn--{H}illiard equation with degenerate mobility},
	author       = {Barrett, John W and Blowey, James F and Garcke, Harald},
	year         = {1999},
	journal      = {SIAM Journal on Numerical Analysis},
	publisher    = {SIAM},
	volume       = {37},
	number       = {1},
	pages        = {286--318}
}

@article{BarrettBloweyGarcke2001,
	title        = {On fully practical finite element approximations of degenerate {C}ahn--{H}illiard systems},
	author       = {Barrett, John W. and Blowey, James F. and Garcke, Harald},
	year         = {2001},
	journal      = {ESAIM: Mathematical Modelling and Numerical Analysis},
	volume       = {35},
	number       = {4},
	pages        = {713--748}
}

@article{BramblePasciakSteinbach2002,
  author  = {Bramble, James H. and Pasciak, Joseph E. and Steinbach, Olaf},
  title   = {On the stability of the {$L^2$} projection in {$H^1(\Omega)$}},
  journal = {Mathematics of Computation},
  volume  = {71},
  number  = {237},
  pages   = {147--156},
  year    = {2002},
  doi     = {10.1090/S0025-5718-01-01314-X}
}

@article{brunk2025analysis,
	title        = {Analysis and structure-preserving approximation of a {{C}ahn--{H}illiard-Forchheimer} system with solution-dependent mass and volume source},
	author       = {Brunk, Aaron and Fritz, Marvin},
	year         = {2025},
	journal      = {ESAIM Math. Model. Numer. Anal.},
	volume       = {59},
	number       = {6},
	pages        = {2991--3020}
}

@article{brunk2025ohta,
	title        = {Analysis and discretization of the {O}hta--{K}awasaki equation with forcing and degenerate mobility},
	author       = {Brunk, Aaron and Fritz, Marvin},
	year         = {2025},
	journal      = {Partial Differ. Equ. Appl.},
	publisher    = {Springer},
	volume       = {6},
	number       = {6},
	pages        = {Article 54}
}

@article{brunk2025structure,
	title        = {Structure-Preserving Approximation of the {{C}ahn--{H}illiard--Biot} System},
	author       = {Brunk, Aaron and Fritz, Marvin},
	year         = {2025},
	journal      = {Numer. Methods Partial Differ. Equ.},
	publisher    = {Wiley Online Library},
	volume       = {41},
	number       = {1},
	pages        = {e23159}
}

@article{brunk2026reviewthermodynamicstructuresstructurepreserving,
	title        = {Review of thermodynamic structures and structure-preserving discretisations of {C}ahn--{H}illiard-type models},
	author       = {Aaron Brunk and Marco F. P. ten Eikelder and Marvin Fritz and Dennis Höhn and Dennis Trautwein},
	year         = {2026},
    note={arXiv:2602.08791},
    journal={Preprint}
}

@article{CopettiElliott1992,
  title={Numerical analysis of the {C}ahn--{H}illiard equation with a logarithmic free energy},
  author={Copetti, Maria In{\^e}s Martins and Elliott, Charles M},
  journal={Numerische Mathematik},
  volume={63},
  number={1},
  pages={39--65},
  year={1992},
  publisher={Springer}
}

@book{evans2022partial,
	title        = {Partial Differential Equations},
	author       = {Evans, Lawrence C},
	year         = {2010},
	publisher    = {American Mathematical Society},
    series={Graduate Studies in Mathematics},
    volume=19
}

@article{FengProhl2004,
	title        = {Error analysis of a mixed finite element method for the {C}ahn--{H}illiard equation},
	author       = {Feng, Xiaobing and Prohl, Andreas},
	year         = {2004},
	journal      = {Numerische Mathematik},
	publisher    = {Springer},
	volume       = {99},
	number       = {1},
	pages        = {47--84}
}

@article{brunk2026high,
    author = {Brunk, Aaron and Fritz, Marvin},
    title ={High-order conforming finite elements for the {C}ahn--{H}illiard equation: {R}elative-energy stability and energy defects } ,
    journal = {arXiv},
    volume={2606.06719},
    year = 2026
}

@article{fritz2026global,
	title        = {Global weak solutions of a one-sided degenerate {C}ahn--{H}illiard model for traction-driven digit morphogenesis},
	author       = {Fritz, Marvin},
	year         = {2026},
	journal      = {arXiv},
    volume={2606.10793}
}

@article{garcke2022numerical,
	title        = {Numerical analysis for a {C}ahn--{H}illiard system modelling tumour growth with chemotaxis and active transport},
	author       = {Garcke, Harald and Trautwein, Dennis},
	year         = {2022},
	journal      = {Journal of Numerical Mathematics},
	publisher    = {De Gruyter},
	volume       = {30},
	number       = {4},
	pages        = {295--324}
}

@article{GarckeRumpfWeikard2001,
	title        = {The {C}ahn--{H}illiard equation with elasticity---finite element approximation and qualitative studies},
	author       = {Garcke, Harald and Rumpf, Martin and Weikard, Ulrich},
	year         = {2001},
	journal      = {Interfaces and Free Boundaries},
	volume       = {3},
	number       = {1},
	pages        = {101--118}
}

@article{kim2007numerical,
	title        = {A numerical method for the {C}ahn--{H}illiard equation with a variable mobility},
	author       = {Kim, Junseok},
	year         = {2007},
	journal      = {Communications in Nonlinear Science and Numerical Simulation},
	publisher    = {Elsevier},
	volume       = {12},
	number       = {8},
	pages        = {1560--1571}
}

@article{kim2009numerical,
	title        = {A numerical method for the ternary {C}ahn--{H}illiard system with a degenerate mobility},
	author       = {Kim, Junseok and Kang, Kyungkeun},
	year         = {2009},
	journal      = {Applied Numerical Mathematics},
	publisher    = {Elsevier},
	volume       = {59},
	number       = {5},
	pages        = {1029--1042}
}

@article{KondoMiura2010,
	title        = {Reaction-diffusion model as a framework for understanding biological pattern formation},
	author       = {Kondo, Shigeru and Miura, Takashi},
	year         = {2010},
	journal      = {Science},
	volume       = {329},
	number       = {5999},
	pages        = {1616--1620}
}

@article{Raspopovic2014,
	title        = {Digit patterning is controlled by a {Bmp}-{Sox9}-{Wnt} {T}uring network modulated by morphogen gradients},
	author       = {Raspopovic, Jelena and Marcon, Luciano and Russo, Luca and Sharpe, James},
	year         = {2014},
	journal      = {Science},
	volume       = {345},
	number       = {6196},
	pages        = {566--570},
	doi          = {10.1126/science.1252960}
}

@article{rathgeber2016firedrake,
	title        = {Firedrake: {A}utomating the finite element method by composing abstractions},
	author       = {Rathgeber, Florian and Ham, David A and Mitchell, Lawrence and et al},
	year         = {2016},
	journal      = {ACM Trans. Math. Softw.},
	publisher    = {ACM New York, NY, USA},
	volume       = {43},
	number       = {3},
	pages        = {Article 24, 1--27}
}

@article{Sheth2012,
	title        = {Hox genes regulate digit patterning by controlling the wavelength of a {T}uring-type mechanism},
	author       = {Sheth, Rushikesh and Marcon, Luciano and Bastida, Maria F. and Junco, Marta and Quintana, Laura and Dahn, Randall and Kmita, Marcin and Sharpe, James and Ros, Maria A.},
	year         = {2012},
	journal      = {Science},
	volume       = {338},
	number       = {6113},
	pages        = {1476--1480},
	doi          = {10.1126/science.1226804}
}

@article{TierraGuillenGonzalez2015,
	title        = {Numerical methods for solving the {C}ahn--{H}illiard equation and its applicability to related energy-based models},
	author       = {Tierra, Guillermo and Guill{\'e}n-Gonz{\'a}lez, Francisco},
	year         = {2015},
	journal      = {Archives of Computational Methods in Engineering},
	publisher    = {Springer},
	volume       = {22},
	number       = {2},
	pages        = {269--289}
}

@article{Tsutsumi2025,
  title={Mechanochemical instabilities drive digit morphogenesis in organoids},
  author={Tsutsumi, Rio and Diez, Antoine and Plunder, Steffen and Kimura, Ryuichi and Shinya, Oki and Takizawa, Kaori and Nakano, Rei and Akiyama, Haruhiko and Takada, Ritsuko and Takada, Shinji and others},
  journal={bioRxiv},
  volume={2025.08.31.673315},
  year={2025},
  publisher={Cold Spring Harbor Laboratory}
}

@article{Turing1952,
	title        = {The chemical basis of morphogenesis},
	author       = {Turing, Alan M.},
	year         = {1952},
	journal      = {Philosophical Transactions of the Royal Society of London. Series B, Biological Sciences},
	volume       = {237},
	number       = {641},
	pages        = {37--72}
}

\end{document}